\documentclass{article}

\usepackage{amsthm}
\usepackage{amsmath}
\usepackage{amssymb}
\usepackage{amsfonts}
\usepackage{framed}
\usepackage{xcolor}
\usepackage{fullpage}
\usepackage{hyperref}
\hypersetup{
    colorlinks=true,
    linkcolor=blue!60!black,
    citecolor=blue!60!black,
    urlcolor=blue!60!black,
    filecolor=blue!60!black,
    linktoc=all
}
\usepackage{algorithm, algorithmic}
\usepackage{graphicx, caption, subcaption}
\usepackage[authoryear,round,sort&compress]{natbib}
\usepackage{booktabs}
\usepackage{makecell}

\newcommand*{\dt}[1]{\dot{#1}}

\DeclareMathOperator{\rank}{rank}
\DeclareMathOperator{\RGS}{RGS}

\newcommand{\norm}[1]{\left\lVert#1\right\rVert}
\newcommand{\scal}[1]{\left\langle #1 \right\rangle}

\newcommand{\R}{\mathbb{R}}
\newcommand{\Rmn}{\mathbb{R}^{m \times n}}
\newcommand{\Rnr}{\mathbb{R}^{n \times r}}
\newcommand{\Rmr}{\mathbb{R}^{m \times r}}
\newcommand{\Rrr}{\mathbb{R}^{r \times r}}

\newcommand{\Mr}{\mathcal{M}_r}
\newcommand{\T}{\mathcal T}
\newcommand{\proj}[2]{\mathcal{P}_{#1} \left[ #2 \right]}
\newcommand{\projperp}[2]{\mathcal{P}_{#1}^{\perp} \left[ #2 \right]}
\newcommand{\Range}{\mathrm{Range}}
\renewcommand{\Im}{\mathrm{Im}}

\newtheorem{theorem}{\bf Theorem}[section]
\newtheorem{definition}[theorem]{\bf Definition}
\newtheorem{lemma}[theorem]{\bf Lemma}
\newtheorem{remark}[theorem]{\bf Remark}
\newtheorem{corollary}[theorem]{\bf Corollary}
\newtheorem{proposition}[theorem]{\bf Proposition}
\newtheorem{assumption}{Assumption}

\title{Sketch low-rank dynamics: orthogonal vs.\ oblique projections}
\author{Benjamin Carrel \thanks{The work of this author was performed while at PSI Center for Scientific Computing, Theory and Data, Paul Scherrer Institute, Villigen PSI, Switzerland}, Laura Grigori\thanks{PSI Center for Scientific Computing, Theory and Data, Paul Scherrer Institute, Villigen PSI, and Institute of Mathematics, EPFL, Switzerland}}
\date{July 2026}

\begin{document}

\maketitle

\begin{abstract}
We study how sketching techniques from randomized numerical linear algebra can be incorporated into the dynamical low-rank approximation (DLRA) of large-scale matrix differential equations.
A natural approach is to sketch the Galerkin condition that defines the DLRA, which leads to an oblique tangent space projection. We show that this oblique projection approximately reproduces the standard DLRA only under restrictive conditions on the vector field, and that it fails on problems with a large perpendicular residual.
As an alternative, we propose an orthogonal sketch DLRA that evolves sketch-orthogonal bases while using standard orthogonal projections for the dynamics. This approach preserves the geometric structure of the classical DLRA and is numerically stable. The computational advantage of randomized Gram--Schmidt over Householder QR lies in fewer global synchronizations on a row-distributed basis, at a comparable flop count; when the basis is well conditioned, randomized Gram--Schmidt can be replaced by randomized Cholesky QR, which additionally shifts the basis update from BLAS-2 to BLAS-3 kernels, making it well-suited to modern accelerators.
We derive sketch versions of the projector-splitting and BUG integrators, and demonstrate the approach on the Allen--Cahn, Fokker--Planck, and Vlasov--Poisson equations.
\end{abstract}

\section{Introduction} \label{sec:introduction}

Large-scale matrix differential equations arise in a variety of scientific computing applications. In kinetic theory, the Vlasov--Poisson equation describes the evolution of particle distributions in collisionless plasmas. In materials science, the Allen--Cahn equation models phase separation processes. In stochastic analysis, the Fokker--Planck equation governs the evolution of probability density functions. When discretized on tensor-product grids, these equations lead to matrix differential equations of the form
\begin{equation} \label{eq:fom_intro}
\dt{A}(t) = F(t, A(t)), \quad A(0) = A_0 \in \Rmn,
\end{equation}
where $m$ and $n$ can be very large. We refer to \eqref{eq:fom_intro} as the full order model (FOM). In the rest of the paper, we will consider autonomous problems, without loss of generality, in order to keep the notation light.

In the last decade, various techniques for compressing the dynamics have been proposed. One of the most popular and well-studied techniques is the dynamical low-rank approximation (DLRA)~\citep{koch2007dynamical}, which has since been applied to kinetic equations~\citep{einkemmer2019quasi, einkemmer2021mass}, weakly compressible flows~\citep{einkemmer2019low}, and high-dimensional nonlinear PDEs~\citep{dektor2021rank}. The key idea is to evolve the lower-dimensional dynamics by projecting the vector field onto the tangent space of the manifold $\Mr$ of fixed rank-$r$ matrices. This yields a system of coupled differential equations for the low-rank factors, reducing the complexity from $O(mn)$ to $O((m+n)r)$. The DLRA is defined by the Dirac--Frenkel variational principle:
\begin{equation}
    \dt{Y}(t) = \mathcal P_{Y(t)}F(Y(t)), \quad Y(0) = Y_0 \in \Mr,
\end{equation}
where $\mathcal P_Y$ denotes the $\ell_2$-orthogonal projection onto the tangent space $\T_Y \Mr$.

Practical numerical integrators for the DLRA include the projector-splitting integrator~\citep{lubich2014projector} and the BUG integrator~\citep{ceruti2022unconventional}, together with its higher-order and rank-adaptive variants~\citep{ceruti2022rank, ceruti2024robust} and related projection methods~\citep{kieri2019projection, carrel2023projected}, as well as interpolatory low-rank integrators~\citep{carrel2025interpolatory}. These methods always require QR decompositions at each time step in order to orthogonalize the updated bases. In the large-scale setting on distributed hardware, the Householder QR of $K \in \Rmr$ suffers from a synchronization bottleneck: it performs $r$ sequential panel factorizations, each requiring a global reduction on the row-distributed factor~\citep{demmel2012communication}; the panel factorizations themselves are BLAS-2 operations. Randomized numerical linear algebra (RNLA)~\citep{halko2011finding, martinsson2020randomized, murray2023randomized, nakatsukasa2024fast} provides a natural alternative. These sketch-based orthogonalizations replace the Householder QR by working on a small sketch $\Theta K \in \R^{\ell \times r}$ of the tall factor $K \in \Rmr$ (with $\Theta \in \R^{\ell \times m}$). In its simplest form, randomized Cholesky QR computes the QR decomposition of $\Theta K$ and uses the resulting triangular factor to orthogonalize $K$, collapsing the whole orthogonalization to a single BLAS-3 matrix multiply and a single global reduction. The more robust randomized Gram--Schmidt (RGS) builds the basis column by column instead; at the typical sketch size $\ell = O(r)$ its leading-order flop count $O(m \ell r)$ matches Householder QR, while it requires far fewer global synchronizations --- the source of the speedup on modern parallel hardware (Section~\ref{subsec:cost}).

Beyond the orthogonalization step, randomization has also been used directly inside the time integration of low-rank matrix ODEs, most notably through the randomized Runge--Kutta methods of \citet{lam2025randomized} --- a direction complementary to ours, which keeps the time integrator classical and randomizes only the basis update. Closely related are the dynamical randomized methods of \citet{carrel2025randomized}, which build the low-rank integrator itself from randomized linear algebra --- randomizing the range-finding and projection steps rather than only the orthogonalization --- and report accurate, low-variance, structure-preserving behavior on stiff problems, including the conservation of physical invariants on Vlasov--Poisson. The sketch orthogonalization developed here can serve as the basis update within those methods as well.

There are two natural ways to incorporate sketching into DLRA:
\begin{itemize}
\item \textbf{Oblique approach:} sketch the Galerkin condition that defines the DLRA. This leads to an oblique tangent space projection involving the sketch matrices $\Theta$ and $\Omega$. The resulting dynamics are \emph{different} from the standard DLRA.
\item \textbf{Orthogonal approach:} evolve sketch-orthogonal bases (e.g.\ via RGS) but use standard orthogonal projections for the dynamics. This preserves the geometric structure of the classical DLRA. The sketch only affects the runtime of the orthogonalization process, not the dynamics.
\end{itemize}

The key mathematical insight of this paper is that the oblique approach changes the dynamics and leads to errors on problems with a large perpendicular residual (see Definition~\ref{def:lrc_approx_rk}), while the orthogonal approach preserves the dynamics exactly. Table~\ref{tab:intro_comparison} summarizes the qualitative differences; a detailed comparison is given in Table~\ref{tab:comparison}.
This is illustrated in Figure~\ref{fig:motivating}, which shows the energy over time of a two-stream instability simulation for both approaches. The oblique sketch DLRA exhibits energy drift, while the orthogonal sketch DLRA matches the standard DLRA with a near-constant energy in the nonlinear regime.

\begin{figure}[htbp]
    \centering
    \includegraphics[width=0.8\textwidth]{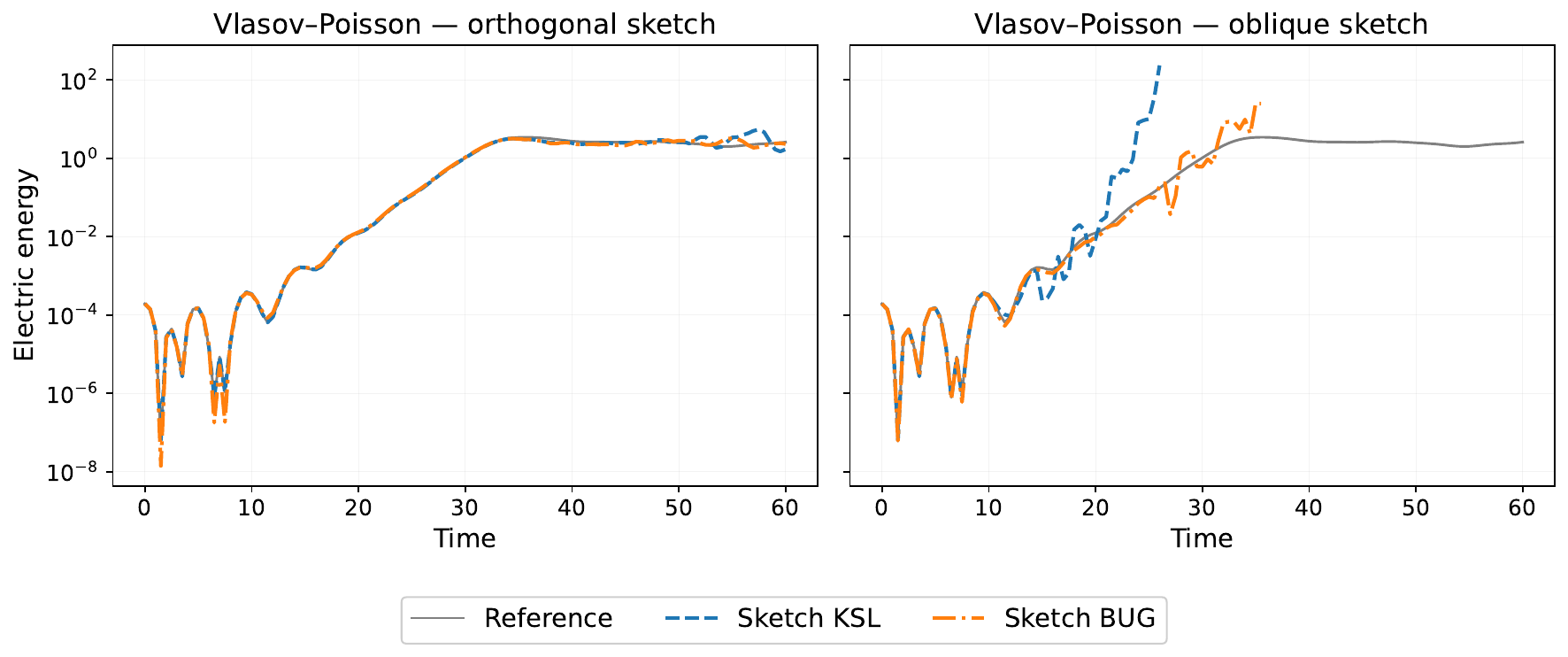}
    \caption{Motivating example: electric energy on the two-stream instability (Vlasov--Poisson equation~\eqref{eq:vlasov_poisson}) with $64$ grid points and rank-$8$ approximation. The orthogonal sketch DLRA (left figure) reproduces the reference electric-energy trajectory, while the oblique sketch DLRA (right figure) drifts away from it.}
    \label{fig:motivating}
\end{figure}

\begin{table}[htbp]
\centering
\begin{tabular}{lcc}
\toprule
 & \textbf{Oblique sketch DLRA} & \textbf{Orthogonal sketch DLRA} \\
\midrule
\textbf{Dynamics} & Perturbed & Same as DLRA \\
\textbf{Equivalent to DLRA} & Approximately, on LRC problems & Always \\
\textbf{\makecell[l]{Large perpendicular residual \\ (e.g.\ Vlasov--Poisson)}} & \makecell{Drifts from DLRA \\ (e.g.\ energy drift)} & Matches DLRA \\
\bottomrule
\end{tabular}
\caption{Qualitative comparison of the two approaches. See Table~\ref{tab:comparison} for a detailed version.}
\label{tab:intro_comparison}
\end{table}

\subsection*{Contributions and outline}

The main contributions of this paper are:
\begin{enumerate}
    \item We propose an orthogonal sketch DLRA framework that evolves sketch-orthogonal bases $(P, W)$ while using standard orthogonal projections. This is connected to the classical DLRA via a bijective map $\varphi$ that relates sketch-orthogonal and orthogonal factorizations.
    \item We show that sketching the Galerkin condition leads to an oblique projection that is equivalent to the classical DLRA \emph{if and only if} the sketches make the perpendicular component $\mathcal P_Y^\perp F(Y)$ orthogonal to the tangent space (Proposition~\ref{prop:oblique_equivalence}). This condition holds up to the sketching error on low-rank compatible problems (Section~\ref{subsec:equivalence}), while on problems with a large perpendicular residual (e.g.\ Vlasov--Poisson) the oblique approach fails in general.
    \item We derive sketch versions of the projector-splitting and BUG integrators, replacing Householder QR with RGS.
    \item We demonstrate the approach on the Allen--Cahn, Fokker--Planck, and Vlasov--Poisson equations. The orthogonal sketch DLRA matches the accuracy of the standard DLRA, while the oblique approach loses physical invariants (e.g.\ the electric energy) as soon as the perpendicular residual becomes large.
\end{enumerate}

Preliminaries are collected in Section~\ref{sec:preliminaries}; the orthogonal and oblique constructions are developed in Sections~\ref{sec:orthogonal} and~\ref{sec:oblique}; sketch integrators and their cost are derived in Section~\ref{sec:integrators}; numerical experiments are reported in Section~\ref{sec:experiments}.

\section{Preliminaries} \label{sec:preliminaries}

\subsection{Oblivious subspace embeddings} \label{subsec:ose}

We recall the definition of an oblivious subspace embedding (OSE), introduced by \citet{sarlos2006improved} in a sketching context and subsequently surveyed in~\citep{halko2011finding, woodruff2014sketching, martinsson2020randomized, murray2023randomized}.

\begin{definition}[Oblivious subspace embedding] \label{def:ose}
  A distribution over random matrices $\Omega \in \R^{\ell \times m}$ is an oblivious subspace embedding with parameters $\mathrm{OSE}(k, \epsilon, \delta)$ if, for every $k$-dimensional subspace $\mathcal V \subset \R^m$,
  $$ (1 - \epsilon) \norm{x}_2^2 \;\leq\; \norm{\Omega x}_2^2 \;\leq\; (1 + \epsilon) \norm{x}_2^2 \qquad \text{for all } x \in \mathcal V, $$
  holds with probability at least $1 - \delta$ over the draw of $\Omega$. Equivalently, 
  $$ | \scal{\Omega x, \Omega y} - \scal{x, y} | \leq \epsilon \norm{x}_2 \norm{y}_2 \qquad \text{for all } x, y \in \mathcal V. $$
\end{definition}

Typical choices for oblivious subspace embeddings include Gaussian random matrices, the subsampled randomized Hadamard transform (SRHT)~\citep{ailon2006approximate}, and sparse sign embeddings~\citep{tropp2017practical}. The sampling dimension $\ell$ depends on the target subspace dimension: for the rank-$r$ subspaces that arise in this paper, a practical choice for large-scale problems is $\ell \in [2r, 4r]$~\citep{balabanov2022randomized}, while for the moderate sizes of Section~\ref{sec:experiments} we find that $\ell = 2r$ is already sufficient. In both regimes $\ell \ll m, n$. The OSE property extends from individual vectors to tall matrices, as recorded in the next proposition.

\begin{proposition}[Tall matrix embedding~\citep{sarlos2006improved}] \label{prop:tall_embedding}
  Let $\Omega$ be an $\mathrm{OSE}(2r, \epsilon, \delta)$ and let $U, V \in \Rmr$ be two fixed tall matrices ($r \ll m$), independent of the draw of $\Omega$. Then, with probability at least $1 - \delta$,
  $$| \scal{\Omega U, \Omega V} - \scal{U,V} | \leq \epsilon \norm{U}_F \norm{V}_F.$$
  Equivalently, we also have
  $$| \scal{U^T \Omega^T, V^T \Omega^T} - \scal{U^T,V^T} | \leq \epsilon \norm{U}_F \norm{V}_F.$$
  In particular, when the matrices $U$ and $V$ have orthonormal columns, we have
  $$| \norm{\Omega U}_F^2 - \norm{U}_F^2 | \leq \epsilon \cdot r, \quad | \norm{V^T \Omega^T}_F^2 - \norm{V}_F^2 | \leq \epsilon \cdot r.$$
\end{proposition}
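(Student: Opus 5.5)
The plan is to reduce the matrix statement to the vector statement of Definition~\ref{def:ose} by working in one fixed subspace that contains all columns of both $U$ and $V$. Write $U = [u_1, \dots, u_r]$ and $V = [v_1, \dots, v_r]$, and let $\mathcal V = \operatorname{span}\{u_1,\dots,u_r,v_1,\dots,v_r\} \subset \R^m$. Then $\dim \mathcal V \le 2r$. If the dimension is strictly smaller, I enlarge $\mathcal V$ to an arbitrary $2r$-dimensional subspace chosen deterministically from $U,V$; this uses $2r \le m$, which holds since $r \ll m$. This subspace is fixed and independent of $\Omega$ because $U,V$ are. Applying the $\mathrm{OSE}(2r,\epsilon,\delta)$ property to $\mathcal V$ gives a single event $E$ of probability at least $1-\delta$. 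On $E$, the inner-product form of the definition holds simultaneously for \emph{all} $x,y\in\mathcal V$, and in particular for every pair $(u_i,v_i)$.

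On $E$ the bound is a short computation. Using $\scal{\Omega U,\Omega V} = \sum_{i=1}^r \scal{\Omega u_i,\Omega v_i}$ and $\scal{U,V} = \sum_i \scal{u_i,v_i}$, the triangle inequality and the vector bound give
$$|\scal{\Omega U,\Omega V}-\scal{U,V}| \le \sum_{i=1}^r \epsilon \norm{u_i}_2\norm{v_i}_2 .$$
The Cauchy--Schwarz inequality in $\R^r$ then bounds the right-hand side by $\epsilon\big(\sum_i\norm{u_i}_2^2\big)^{1/2}\big(\sum_i\norm{v_i}_2^2\big)^{1/2} = \epsilon\norm{U}_F\norm{V}_F$.

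The transposed version needs no new probabilistic input. It follows from the trace identities
$$\scal{U^T\Omega^T,V^T\Omega^T} = \operatorname{tr}(\Omega U V^T\Omega^T) = \operatorname{tr}(U^T\Omega^T\Omega V) = \scal{\Omega U,\Omega V}$$
and $\scal{U^T,V^T}=\scal{U,V}$, so it holds on the same event $E$. For the final claim I take $V=U$ with orthonormal columns, so that $\norm{U}_F^2 = r$. This gives $|\norm{\Omega U}_F^2 - \norm{U}_F^2| \le \epsilon r$. Applying the transposed form to $V$ (with $U$ replaced by $V$) gives the second inequality, since $\norm{V^T\Omega^T}_F = \norm{\Omega V}_F$.

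The only delicate point is the probabilistic one. Applying the OSE bound separately to each pair $(u_i,v_i)$ would cost a union bound over $r$ events. This is avoided by placing all $2r$ columns in one fixed subspace of dimension at most $2r$, which is exactly why the hypothesis is $\mathrm{OSE}(2r,\cdot,\cdot)$ rather than $\mathrm{OSE}(2,\cdot,\cdot)$. It also matters that $U$ and $V$ are independent of $\Omega$, so that $\mathcal V$ is a fixed subspace to which the oblivious guarantee applies.
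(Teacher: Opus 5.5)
Your proposal is correct and follows the paper's own argument. You apply the inner-product form of the OSE property once to the fixed subspace $\Range([U \;\, V])$, of dimension at most $2r$, and sum over the $r$ column pairs, with your Cauchy--Schwarz step making the $\epsilon\norm{U}_F\norm{V}_F$ bound explicit; the transposed form then follows from the trace identity. One clarification: both orthonormal-case bounds hold on the same event $E$, because $\Range(U)$ and $\Range(V)$ both lie in your $\mathcal V$, so no separate event or union bound is needed when you ``take $V = U$'' and then ``replace $U$ by $V$''.
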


The proposition follows by applying the OSE property in its equivalent inner-product form (Definition~\ref{def:ose}) to the fixed subspace $\Range([U \;\, V]) \subset \R^m$, whose dimension is at most $2r$, and summing over the $r$ column pairs.

The two-sided pairings that appear in Sections~\ref{sec:oblique} and~\ref{subsec:equivalence} are controlled by the following corollary, obtained by applying Proposition~\ref{prop:tall_embedding} on each side and collecting the cross terms.

\begin{corollary}[Two-sided embedding] \label{cor:two_sided}
Let $\Theta \in \R^{\ell \times m}$ and $\Omega \in \R^{\ell \times n}$ be $\mathrm{OSE}(d, \epsilon, \delta)$, and let $Z, Z' \in \Rmn$ be fixed (independently of $\Theta$ and $\Omega$) with a combined column space and a combined row space of dimension at most $d$. Then, with probability at least $1 - 2\delta$,
$$\bigl| \scal{\Theta Z \Omega^T, \, \Theta Z' \Omega^T} - \scal{Z, Z'} \bigr| \;\leq\; (2\epsilon + \epsilon^2)\, \norm{Z}_F\, \norm{Z'}_F,$$
and in particular $\norm{\Theta Z \Omega^T}_F^2 \geq (1 - \epsilon)^2 \norm{Z}_F^2$.
\end{corollary}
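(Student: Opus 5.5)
The idea is to split the two-sided sketch into two one-sided steps and telescope. We write
$$\scal{\Theta Z \Omega^T, \Theta Z' \Omega^T} - \scal{Z,Z'} = \Bigl(\scal{\Theta Z \Omega^T, \Theta Z' \Omega^T} - \scal{\Theta Z, \Theta Z'}\Bigr) + \Bigl(\scal{\Theta Z, \Theta Z'} - \scal{Z, Z'}\Bigr),$$
and bound each bracket by the OSE property of one sketch, with the other sketch held fixed.

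\textbf{Column side.} Let $\mathcal V_c \subset \R^m$ be the combined column space of $Z, Z'$, with $\dim \mathcal V_c \le d$. Every column of $Z$ and of $Z'$ lies in $\mathcal V_c$. With probability at least $1-\delta$ over $\Theta$, the inner-product form of Definition~\ref{def:ose} holds uniformly on $\mathcal V_c$. Summing over the $n$ column pairs gives
$$|\scal{\Theta Z, \Theta Z'} - \scal{Z,Z'}| \le \epsilon \sum_j \norm{z_j}_2 \norm{z'_j}_2 \le \epsilon \norm{Z}_F \norm{Z'}_F,$$
using Cauchy--Schwarz on the column norms. This is exactly the argument behind Proposition~\ref{prop:tall_embedding}.

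\textbf{Row side.} Set $X = \Theta Z$ and $X' = \Theta Z'$ in $\R^{\ell \times n}$. Their rows are $e_i^T \Theta Z$, so they lie in the combined row space $\mathcal V_r \subset \R^n$ of $Z, Z'$, with $\dim \mathcal V_r \le d$.

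This step needs care about independence. The subspace $\mathcal V_r$ is determined by $Z, Z'$ alone, not by $\Theta$, and $\Omega$ is independent of $Z, Z'$. So the OSE event for $\Omega$ on the fixed subspace $\mathcal V_r$ holds with probability at least $1-\delta$, and on that event the bound holds for all vectors of $\mathcal V_r$ simultaneously. In particular it holds for the rows of $X$ and $X'$, whatever $\Theta$ is. Summing over the $\ell$ row pairs gives
$$|\scal{X\Omega^T, X'\Omega^T} - \scal{X, X'}| \le \epsilon \norm{X}_F \norm{X'}_F.$$
A union bound over the two events gives total probability at least $1-2\delta$.

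\textbf{Combining.} On the column event we have $\norm{\Theta Z}_F^2 \le (1+\epsilon)\norm{Z}_F^2$, hence $\norm{\Theta Z}_F \le \sqrt{1+\epsilon}\,\norm{Z}_F \le (1+\epsilon)\norm{Z}_F$, and the same for $Z'$. The row-side bracket is therefore at most $\epsilon(1+\epsilon)\norm{Z}_F\norm{Z'}_F$. Adding the column-side bracket $\epsilon\norm{Z}_F\norm{Z'}_F$ gives the constant $\epsilon + \epsilon(1+\epsilon) = 2\epsilon + \epsilon^2$.

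\textbf{Lower bound.} Apply the lower OSE inequalities rather than the symmetric bound. The column event gives $\norm{\Theta Z}_F^2 \ge (1-\epsilon)\norm{Z}_F^2$. The row event, applied to the rows of $\Theta Z$, gives $\norm{\Theta Z\Omega^T}_F^2 \ge (1-\epsilon)\norm{\Theta Z}_F^2$. Chaining the two yields $\norm{\Theta Z\Omega^T}_F^2 \ge (1-\epsilon)^2\norm{Z}_F^2$.
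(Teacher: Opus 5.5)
Your proof is correct and follows the paper's route. The paper applies the one-sided tall-matrix embedding with $\Theta$ on the column space and with $\Omega$ on the row space, and the $\epsilon^2$ cross term comes from $\norm{\Theta Z}_F\norm{\Theta Z'}_F \le (1+\epsilon)\norm{Z}_F\norm{Z'}_F$. Your argument also makes explicit two points that the paper's one-line justification glosses over. First, the rows of $\Theta Z$ lie in the fixed row space of $Z$, so the $\Omega$-event needs no conditioning on $\Theta$. Second, the lower bound requires chaining the one-sided lower inequalities, because the symmetric estimate alone only gives $1-2\epsilon-\epsilon^2$.

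One small fix: drop the relaxation $\sqrt{1+\epsilon}\,\norm{Z}_F \le (1+\epsilon)\norm{Z}_F$. The constant $\epsilon(1+\epsilon)$ comes from multiplying the two $\sqrt{1+\epsilon}$ factors; the relaxed bounds would only give $\epsilon(1+\epsilon)^2$.
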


\subsection{Dynamical low-rank approximation} \label{subsec:dlra}

Recall the full order model (FOM) equation $\dt A(t) = F(A(t))$ with initial condition $A(0) = A_0 \in \Rmn$, introduced in~\eqref{eq:fom_intro}. The dynamical low-rank approximation (DLRA) approximates the FOM trajectory by a curve on the manifold of fixed-rank-$r$ matrices $\Mr = \{Y \in \Rmn : \rank(Y) = r\}$, parameterized as $Y = U \Sigma V^T$ with $U \in \mathrm{St}(m,r)$, $V \in \mathrm{St}(n,r)$, and $\Sigma \in \mathrm{GL}(r)$. Its tangent space at $Y$ is the set
$$\T_Y \Mr = \{\delta U \Sigma V^T + U \delta \Sigma V^T + U \Sigma \delta V^T \mid \delta U \in \Rmr,\ \delta \Sigma \in \Rrr,\ \delta V \in \Rnr,\ \delta U^T U = 0,\ \delta V^T V = 0\},$$
where $\delta U^T U = 0$ and $\delta V^T V = 0$ are gauge conditions ensuring a unique representation of tangent vectors (see~\citep{absil2008optimization, vandereycken2013low} for the geometric background of Stiefel and low-rank manifolds). The orthogonal projection onto $\T_Y \Mr$ is given by
\begin{equation} \label{eq:tangent_proj}
\mathcal P_Y Z = UU^T Z + Z VV^T - UU^T Z VV^T.
\end{equation}

The DLRA trajectory $Y(t)$ is defined by the Galerkin condition
\begin{equation} \label{eq:galerkin}
\scal{\dt{Y}(t) - F(Y(t)), \, \delta Y} = 0 \qquad \forall \delta Y \in \mathcal T_{Y(t)} \mathcal M_r.
\end{equation}
Decomposing the vector field as $F(Y) = \mathcal P_Y F(Y) + \mathcal P_Y^\perp F(Y)$, the Galerkin condition automatically annihilates the perpendicular component against any tangent direction, since $\scal{\mathcal P_Y^\perp F(Y),\, \delta Y} = 0$ for every $\delta Y \in \T_Y \Mr$. The size of this perpendicular component is what governs the modeling error of the DLRA.
Using the projector $\mathcal P_Y$ above, the Galerkin condition is equivalent to the projected matrix differential equation
\begin{equation} \label{eq:dlra}
\dt{Y}(t) = \mathcal P_{Y(t)} F(Y(t)), \quad Y(0) = Y_0 \in \Mr,
\end{equation}
which is the standard DLRA equation found in the literature~\citep{koch2007dynamical}. Throughout, the vector field in the low-rank dynamics is evaluated at the current approximation $Y$ --- we write $F(Y)$, or $F(U \Sigma V^T)$ (resp.\ $F(P S W^T)$) in factored form --- and we reserve $F(A)$ for the full-order model~\eqref{eq:fom_intro}; when $Y \approx A$ the two fields nearly coincide.

From that description of the tangent space and with the Galerkin condition, it is straightforward to derive the three coupled differential equations
\begin{equation} \label{eq:coupled_equations}
    \left\{
    \begin{aligned}
        &\dt{U} = (I - UU^T) F(U \Sigma V^T) V \Sigma^{-1}, \\
        &\dt{\Sigma} = U^T F(U \Sigma V^T) V, \\
        &\dt{V} = (I - VV^T) F(U \Sigma V^T)^T U \Sigma^{-T}.
    \end{aligned}
    \right.
\end{equation}

As shown in~\citep{koch2007dynamical}, solving these three coupled equations is ill-conditioned when $\Sigma$ has small singular values, because of the $\Sigma^{-1}$ factor. This was a major numerical issue for DLRA and motivated the development of the projector-splitting and BUG integrators~\citep{lubich2014projector, ceruti2022unconventional}, which avoid inverting $\Sigma$ by splitting the dynamics into well-conditioned substeps.

A central notion for the upcoming analysis of the oblique sketch DLRA is \emph{low-rank compatibility}, which classifies how much of the vector field $F$ escapes the tangent space of $\Mr$. We use $\eta$ for the low-rank compatibility tolerance throughout, to avoid collision with the OSE parameter $\epsilon$ of Section~\ref{subsec:ose}.
\begin{definition}[Approximate low-rank compatibility] \label{def:lrc_approx_rk}
A problem $\dt{A} = F(A)$ is \emph{$(\eta, r, k)$-low-rank compatible} if
$$\norm{\projperp{Y}{F(Y)} - \left[\projperp{Y}{F(Y)}\right]_k}_F \leq \eta \qquad \text{for every } Y \in \Mr \cap \mathcal N,$$
where $[\cdot]_k$ denotes the best rank-$k$ approximation in the Frobenius norm, and $\mathcal N$ denotes a certain neighborhood of the solution to the problem.
\end{definition}

The special case $\eta = 0$ yields \emph{exact $(r,k)$-low-rank compatibility} (the perpendicular component has rank at most $k$), while the case $k = 0$ recovers the norm-based condition of \citet{kieri2016discretized}.

\subsection{Sketch-orthogonality and randomized Gram--Schmidt} \label{subsec:sketch_ortho}

We now introduce the notion of sketch-orthogonality, which will be central to our approach.
Consider two oblivious subspace embeddings $\Theta \in \R^{\ell \times m}$ and $\Omega \in \R^{\ell \times n}$ with sampling parameter $\ell > 0$.

\begin{definition}[Sketch Stiefel manifold]
The sketch Stiefel manifolds are defined as
$$\mathrm{St}_{\Theta}(m, r) = \left\{ P \in \R^{m \times r} \mid (\Theta P)^T (\Theta P)= I_r \right\},$$
$$\mathrm{St}_{\Omega}(n, r) = \left\{ W \in \R^{n \times r} \mid (\Omega W)^T (\Omega W) = I_r \right\}.$$
\end{definition}

A matrix $P \in \mathrm{St}_\Theta(m, r)$ is said to have \emph{sketch-orthogonal} columns with respect to $\Theta$. Note that $P$ itself does not have orthogonal columns in general; only its sketch $\Theta P \in \R^{\ell \times r}$ does.

\begin{assumption}[Non-degenerate sketching] \label{ass:embedding}
Throughout, the sketches $\Theta$ and $\Omega$ are injective on the column and row spaces of every rank-$r$ iterate $Y = P S W^T$ that arises --- equivalently, $\Theta P$ and $\Omega W$ have full column rank $r$. This is exactly the embedding property of an oblivious subspace embedding: an $\mathrm{OSE}(r, \epsilon, \delta)$ is an $\epsilon$-embedding of any fixed rank-$r$ subspace, hence injective on it, with probability at least $1 - \delta$, and we condition on this favorable event. We also take $\Theta$ and $\Omega$ to have full row rank $\ell$ (with $\ell \le m, n$), which holds almost surely for the Gaussian and SRHT sketches used here and ensures that the map $\delta Y \mapsto \Theta\, \delta Y\, \Omega^T$ sends $\T_Y \Mr^{\Theta, \Omega}$ onto the tangent space at $\Theta Y \Omega^T$ (used in the proof of Lemma~\ref{lem:oblique_coupled}).
\end{assumption}

\begin{remark}[Notation for pseudo-inverses] \label{rem:pinv_notation}
Throughout this paper, $(\Theta P)^+$ denotes the Moore--Penrose pseudo-inverse (tall matrix with full column rank: a \emph{left} pseudo-inverse). Its transpose will be denoted $(\Theta P)^{+T} := ((\Theta P)^+)^T = ((\Theta P)^T)^+$. The same applies to $(\Omega W)^+$. Under sketch-orthogonality the columns of $\Theta P$ are orthonormal, so $(\Theta P)^+ = (\Theta P)^T = P^T \Theta^T$ and $(\Theta P)^{+T} = \Theta P$; symmetrically $(\Omega W)^+ = W^T \Omega^T$. For the matrices $P^T P \in \R^{r \times r}$ and $W^T W \in \R^{r \times r}$ that appear in the orthogonal formulas, we keep the ordinary inverse $(P^T P)^{-1}$, $(W^T W)^{-1}$, since these matrices are square and invertible (under Assumption~\ref{ass:embedding}).
\end{remark}

Given $K \in \Rmr$ with columns $k_1, \dots, k_r$, the simplest approach to produce a sketch-orthogonal basis $P$ is to sketch $\Theta K \in \R^{\ell \times r}$, compute its thin QR factorization $\Theta K = Q R$, and set $P = K R^{-1}$. Then $\Theta P = Q$ has orthonormal columns, so $P \in \mathrm{St}_\Theta(m, r)$. This process is called \emph{randomized Cholesky QR} (rCholQR)~\citep{balabanov2022cholesky, balabanov2025randomized}. However, it can be unstable when $K$ is ill conditioned or breaks down if $\Theta K$ is rank-deficient, since $R$ would then be singular. The \emph{randomized Gram--Schmidt} (RGS) process described in~\citep{balabanov2022randomized} avoids this issue by progressively building the basis $P$, as in the Gram--Schmidt process, while replacing orthogonal projections with oblique projections to produce a $P$ whose columns are orthonormal with respect to the sketched inner product $\scal{\Theta \cdot, \Theta \cdot}$. In more detail, RGS builds $P = [p_1, \dots, p_r]$ and its sketch $\Theta P$ column by column as follows: at step $j$, given the columns built previously $P_{<j} = [p_1, \dots, p_{j-1}]$ and their sketch $\Theta P_{<j}$,
\begin{itemize}
    \item sketch the new column, $s_j = \Theta k_j$;
    \item compute the projection coefficients in the sketch space, $r_{<j,j} = (\Theta P_{<j})^T s_j$;
    \item form the residuals,
    $\tilde s_j = s_j - (\Theta P_{<j})\, r_{<j,j}$ and $\tilde p_j = k_j - P_{<j}\, r_{<j,j}$;
    \item normalize using the sketched norm, $r_{jj} = \norm{\tilde s_j}_2$, $\Theta p_j = \tilde s_j / r_{jj}$, $p_j = \tilde p_j / r_{jj}$.
\end{itemize}
By construction $\Theta P$ has orthonormal columns.
In our experiments, RGS keeps the process numerically stable even when $\Theta K$ is close to rank-deficient, and we apply a single rCholQR refinement to the resulting basis to enforce sketch-orthogonality up to machine precision~\citep{balabanov2022randomized,dedamas2025arnoldi}.

For a structured random sketch such as the subsampled randomized Hadamard transform, computing all $r$ sketches $s_j = \Theta k_j$ costs $O(m r \log \ell)$ in total (or $O(m \ell r)$ for a dense Gaussian sketch). The remaining per-step work is dominated by the long-vector update $\tilde p_j = k_j - P_{<j}\, r_{<j,j}$, costing $2m(j-1)$ flops at step $j$; the sketch-side operations are $O(\ell r)$ per step. With the typical sketch size $\ell = O(r)$, summing over $j = 1, \dots, r$ and ignoring lower-order terms yields a total cost of $mr^2$ flops for RGS. In addition, RGS can exploit mixed precision. It is shown in \citep{balabanov2022randomized} that the operation $P_{<j}\, r_{<j,j}$ can be performed with lower precision while still obtaining a well-conditioned basis $P$. 
We analyze the cost in more detail in Section~\ref{subsec:cost}, and refer to~\citep{balabanov2022randomized} for the cost and stability analysis of RGS. 

\section{Orthogonal sketch DLRA} \label{sec:orthogonal}

In this section, we propose the orthogonal sketch DLRA: we evolve sketch-orthogonal bases but keep \emph{standard orthogonal} projections in the dynamics. The sketch is used only for the orthogonalization step (replacing Householder QR with RGS), not for the projection itself. As we shall see in Section~\ref{sec:oblique}, this is in contrast with the more natural approach of sketching the Galerkin condition, which leads to an oblique projection that can fail with a large perpendicular residual.

\subsection{Sketch-orthogonal manifold and bijective map} \label{subsec:bijective_map}

We define the set of fixed rank $r$ matrices with sketch-orthogonality conditions (w.r.t.\ $\Theta$ and $\Omega$) as
$$\mathcal M_r^{\Theta, \Omega} = \left\{ P S W^T \mid S \in \mathrm{GL}(r), P \in \mathrm{St}_{\Theta}(m, r), W \in \mathrm{St}_{\Omega}(n, r) \right\},$$
and its tangent space at $Y = PSW^T \in \Mr^{\Theta, \Omega}$ as
\begin{equation*}
\mathcal T_Y \Mr^{\Theta, \Omega} = \left\{ \delta P S W^T + P \delta S W^T + P S \delta W^T \;\middle|\;
\begin{aligned}
& \delta S \in \R^{r \times r},\ \delta P \in \R^{m \times r},\ \delta W \in \R^{n \times r} \\
& \delta P^T \Theta^T \Theta P =0, \ \delta W^T \Omega^T \Omega W =0
\end{aligned}
\right\}.
\end{equation*}
The special case $\Theta = I_m$ and $\Omega = I_n$ recover the standard manifold $\Mr$ and its tangent space. For a rectangular sketch ($\ell < m, n$), the identification with $\Mr$ is pointwise rather than global: the inclusion $\Mr^{\Theta, \Omega} \subseteq \Mr$ always holds, and a matrix $Y \in \Mr$ admits a sketch-orthogonal factorization exactly when the sketches are injective on its column and row spaces, that is,
$$\Mr^{\Theta, \Omega} \;=\; \left\{ Y \in \Mr \;\middle|\; \Theta \text{ injective on } \Range(Y) \text{ and } \Omega \text{ injective on } \Range(Y^T) \right\}.$$
By Assumption~\ref{ass:embedding}, every iterate arising in this paper belongs to $\Mr^{\Theta, \Omega}$, and at every such point, the tangent spaces coincide as subsets of $\Rmn$:
$$\T_Y \Mr^{\Theta, \Omega} = \T_Y \Mr \qquad \text{for all } Y \in \Mr^{\Theta, \Omega}.$$
The sketch-orthogonality conditions only affect the parameterization, not the underlying geometric objects. This equivalence is made precise by the following map $\varphi$:
\begin{align*}
\varphi : \quad &\mathrm{St}_{\Theta}(m, r) \times \mathrm{GL}(r) \times \mathrm{St}_{\Omega}(n, r) \rightarrow \mathrm{St}(m, r) \times \mathrm{GL}(r) \times \mathrm{St}(n, r) \\
&\left( P,\ S,\ W \right) \mapsto \left( P (P^T P)^{-1/2},\ (P^T P)^{1/2} S (W^T W)^{1/2},\ W (W^T W)^{-1/2} \right),
\end{align*}
which is a bijection onto its image --- the orthogonal triples whose column and row spaces are embedded by $\Theta$ and $\Omega$, with an inverse map
\begin{align*}
\varphi^{-1} : \quad &\operatorname{Im}\varphi  \rightarrow \mathrm{St}_{\Theta}(m, r) \times \mathrm{GL}(r) \times \mathrm{St}_{\Omega}(n, r) \\
&\left( U,\ \Sigma,\ V \right) \mapsto \left( U (U_{\Theta}^T U_{\Theta})^{-1/2},\ (U_{\Theta}^T U_{\Theta})^{1/2} \Sigma (V_{\Omega}^T V_{\Omega})^{1/2},\ V (V_{\Omega}^T V_{\Omega})^{-1/2} \right),
\end{align*}
where $U_{\Theta} = \Theta U$, $V_{\Omega} = \Omega V$, and $(\cdot)^{1/2}$ denotes the symmetric positive square root of a symmetric positive definite matrix. The domain $\operatorname{Im}\varphi = \{(U, \Sigma, V) \in \mathrm{St}(m, r) \times \mathrm{GL}(r) \times \mathrm{St}(n, r) : \Theta U \text{ and } \Omega V \text{ have full column rank}\}$ is exactly the set of orthogonal triples whose column and row spaces are embedded by $\Theta$ and $\Omega$; this full-column-rank condition is what makes $U_{\Theta}^T U_{\Theta}$ and $V_{\Omega}^T V_{\Omega}$ invertible, so that $\varphi^{-1}$ is well defined.
In particular, given a sketch-orthogonal factorization $Y = PSW^T$, the corresponding orthogonal factorization is $\varphi(Y) = U \Sigma V^T$ with
$$U = P (P^T P)^{-1/2}, \quad \Sigma = (P^T P)^{1/2} S (W^T W)^{1/2}, \quad V = W (W^T W)^{-1/2}.$$

\subsection{Orthogonal sketch DLRA} \label{subsec:ortho_sketch_dlra}

With the bijective map in hand, we can express the standard orthogonal projection~\eqref{eq:tangent_proj} in terms of the sketch-orthogonal bases $P$ and $W$. Since $\Range(Y) = \Range(P) = \Range(U)$ and $\Range(Y^T) = \Range(W) = \Range(V)$, we have $P(P^TP)^{-1}P^T = UU^T$ and $W(W^TW)^{-1}W^T = VV^T$, so that
\begin{equation} \label{eq:orthogonal_proj_sketch}
    \mathcal P_{Y}Z = P (P^TP)^{-1} P^T Z - P (P^TP)^{-1} P^T Z W (W^T W)^{-1} W^T +  Z W (W^T W)^{-1} W^T.
\end{equation}
This is \emph{exactly} the same projection as~\eqref{eq:tangent_proj}, only rewritten in terms of the sketch-orthogonal factors: the sketch matrices $\Theta$ and $\Omega$ enter only implicitly through the parameterization.

We then define the orthogonal sketch DLRA as the differential equation
\begin{equation} \label{eq:ortho_sketch_dlra}
\dt{Y}(t) = \mathcal P_{Y(t)} F(Y(t)), \quad Y(0) = P_0 S_0 W_0^T \in \Mr^{\Theta, \Omega},
\end{equation}
where $\mathcal P_Y$ is the \emph{standard $\ell_2$-orthogonal} projection~\eqref{eq:orthogonal_proj_sketch}. The sketch-orthogonality of $P$ and $W$ is maintained via RGS during time stepping, but the dynamics are governed by orthogonal projections. Equation~\eqref{eq:ortho_sketch_dlra} therefore coincides with the classical DLRA equation~\eqref{eq:dlra} up to a reparameterization via $\varphi$, so the classical DLRA error bound of \citet{koch2007dynamical} applies verbatim, without any sketch-induced constant.

A remark on the gauge is in order. Since $\T_Y\Mr^{\Theta,\Omega}=\T_Y\Mr$ at each $Y\in\Mr^{\Theta,\Omega}$ (Section~\ref{subsec:bijective_map}), the trajectory $Y(t)$ of~\eqref{eq:ortho_sketch_dlra} is determined only by the orthogonal projection $\mathcal P_Y$; a gauge choice only fixes how a tangent vector $\dt Y$ is split into the factor velocities $(\dt P,\dt S,\dt W)$ and leaves $Y(t)$ unchanged. We therefore adopt in Proposition~\ref{prop:ortho_sketch_coupled} the Euclidean gauge $P^T\dt P=0$, $W^T\dt W=0$ --- rather than the sketch gauge $\dt P^T\Theta^T\Theta P=0$, $\dt W^T\Omega^T\Omega W=0$ built into $\T_Y\Mr^{\Theta,\Omega}$ --- precisely because it reduces the coupled equations~\eqref{eq:ortho_sketch_coupled} to the classical form~\eqref{eq:coupled_equations}. This gauge does not keep $P,W$ sketch-orthogonal along the continuous flow, but that is immaterial: the integrators of Section~\ref{sec:integrators} re-orthogonalize the bases by RGS at every macro step, restoring the sketch-orthogonal parameterization at the discrete points where it is actually used.

\begin{proposition}[Orthogonal sketch DLRA coupled equations] \label{prop:ortho_sketch_coupled}
Let $Y(t) = P(t) S(t) W(t)^T$ be a solution of~\eqref{eq:ortho_sketch_dlra}, with factors $P(t)$, $W(t)$ of full column rank and sketch-orthogonal initial factors $P(0) \in \mathrm{St}_{\Theta}(m, r)$, $W(0) \in \mathrm{St}_{\Omega}(n, r)$. Under the gauge conditions $P^T \dt P = 0$ and $W^T \dt W = 0$, the factors $P$, $S$, $W$ satisfy the coupled equations
\begin{equation} \label{eq:ortho_sketch_coupled}
\left\{
\begin{aligned}
&\dt P = (I - P(P^TP)^{-1}P^T) F(PSW^T) W(W^TW)^{-1} S^{-1}, \\
&\dt S = (P^TP)^{-1} P^T F(PSW^T) W (W^TW)^{-1}, \\
&\dt W = (I - W(W^TW)^{-1}W^T) F(PSW^T)^T P(P^TP)^{-1} S^{-T}.
\end{aligned}
\right.
\end{equation}
\end{proposition}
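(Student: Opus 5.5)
The plan is to differentiate the factorization and use the projector form \eqref{eq:orthogonal_proj_sketch}. Write $\Pi_P := P(P^TP)^{-1}P^T$ and $\Pi_W := W(W^TW)^{-1}W^T$. These are the orthogonal projectors onto $\Range(P)=\Range(U)$ and $\Range(W)=\Range(V)$. By the product rule, $\dt Y = \dt P S W^T + P \dt S W^T + P S \dt W^T$. We equate this with $\mathcal P_Y F(Y) = \Pi_P F + F\Pi_W - \Pi_P F \Pi_W$, where $F := F(PSW^T)$. Then we extract each factor velocity by multiplying on the left and right by suitable matrices, exactly as in the classical derivation of \eqref{eq:coupled_equations}.

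\textbf{Step 1: the $S$ equation.} Multiply $\dt Y$ on the left by $(P^TP)^{-1}P^T$ and on the right by $W(W^TW)^{-1}$. The gauge $P^T\dt P = 0$ kills the first term and $W^T \dt W = 0$ kills the third, leaving $\dt S$. On the other side, $(P^TP)^{-1}P^T\Pi_P = (P^TP)^{-1}P^T$ and $\Pi_W W = W$. The three terms of $\mathcal P_Y F$ therefore combine to $(P^TP)^{-1}P^TFW(W^TW)^{-1}$, which is the second line of \eqref{eq:ortho_sketch_coupled}.

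\textbf{Step 2: the $P$ equation.} Multiply on the right by $W(W^TW)^{-1}S^{-1}$ and on the left by $I-\Pi_P$. On the left-hand side, $(I-\Pi_P)P = 0$ removes the $\dt S$ and $\dt W$ terms. The gauge gives $\Pi_P\dt P = 0$, so $(I-\Pi_P)\dt P = \dt P$, and the right factors collapse since $W^TW(W^TW)^{-1}S S^{-1} = I$. On the right-hand side, $(I-\Pi_P)\mathcal P_Y F = (I-\Pi_P)F\Pi_W$, and $\Pi_W W = W$ gives the first line. The $W$ equation follows symmetrically: transpose, multiply by $P(P^TP)^{-1}S^{-T}$ and by $I-\Pi_W$.

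\textbf{Step 3: consistency.} For the equations to be a faithful description of the solution, one should check the converse as well. If $(P,S,W)$ solves \eqref{eq:ortho_sketch_coupled}, then $\dt P$ lies in $\Range(P)^\perp$, so the gauge $P^T\dt P=0$ holds automatically, and likewise for $W$. Summing the three terms of $\dt Y$ then reproduces $\mathcal P_Y F$. The full-column-rank assumption keeps $P^TP$ and $W^TW$ invertible along the flow, and invertibility of $S$ follows because $Y\in\Mr$.

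\textbf{Main obstacle.} The only subtle point is justifying that the Euclidean gauge can be imposed at all, given that $\dt Y$ is only known to lie in $\T_Y\Mr$. I would handle this by invoking the equality of tangent spaces from Section~\ref{subsec:bijective_map} together with the classical unique representation $\delta Y = \delta U\Sigma V^T + U\delta\Sigma V^T + U\Sigma\delta V^T$ with $U^T\delta U=0$ and $V^T\delta V=0$. Reparameterizing through $U = P(P^TP)^{-1/2}$ shows that any tangent curve admits factor velocities satisfying $P^T\dt P = 0$ and $W^T\dt W=0$. Uniqueness of the solution of the ODE then identifies the factor curves.
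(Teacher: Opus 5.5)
Your proof is correct, but it takes a different route from the paper's. The paper does not redo the extraction. It passes through $\varphi$, writing $U = P(P^TP)^{-1/2}$, $\Sigma = (P^TP)^{1/2}S(W^TW)^{1/2}$, $V = W(W^TW)^{-1/2}$. It then observes that the gauge $P^T\dot P = 0$ makes $P^TP$ constant in time, and likewise $W^TW$, so $\dot U$, $\dot\Sigma$, $\dot V$ are fixed rescalings of $\dot P$, $\dot S$, $\dot W$ (and the classical gauge $U^T\dot U = 0$ follows). Finally it substitutes into the classical system \eqref{eq:coupled_equations} and cancels the square-root factors.

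You instead work directly from the product rule and the projector form \eqref{eq:orthogonal_proj_sketch}. You extract each velocity by multiplying with $(P^TP)^{-1}P^T$, $I-\Pi_P$ and $W(W^TW)^{-1}$ and using the gauge, and all three extractions check out.

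\begin{itemize}
\item Your route is self-contained: it needs no matrix square roots, no conservation of $P^TP$, and no appeal to the classical equations.
\item Your Step~3 also gives the converse, which the paper does not state: solutions of \eqref{eq:ortho_sketch_coupled} automatically satisfy the gauge and solve \eqref{eq:ortho_sketch_dlra}.
\item The paper's route is shorter once the classical result is granted. It also shows explicitly that, along the Euclidean-gauged flow, the sketch factors are constant reparameterizations of the classical ones, which is the point of that section.
\end{itemize}

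Your ``main obstacle'' is not an obstacle for this statement, because the gauge is a hypothesis and your extraction needs nothing more. If you do want existence of such a factorization, take the classical gauged $U(t)$ and set $P(t) = U(t)G$ with the constant matrix $G = \bigl((\Theta U(0))^T\Theta U(0)\bigr)^{-1/2}$. Then $P(0)$ is sketch-orthogonal and $P^T\dot P = G\,U^T\dot U\,G = 0$. This is cleaner than your loosely phrased reparameterization argument.
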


\begin{proof}
Apply the map $\varphi$ of Section~\ref{subsec:bijective_map} --- whose defining formulas are well defined for any factors of full column rank, sketch-orthogonal or not --- to write $Y = U \Sigma V^T$ with
$$U = P(P^TP)^{-1/2}, \quad \Sigma = (P^TP)^{1/2} S (W^TW)^{1/2}, \quad V = W(W^TW)^{-1/2}.$$
The gauge condition $P^T \dt P = 0$ implies $\frac{d}{dt}(P^TP) = \dt P^T P + P^T \dt P = 0$, so $(P^TP)^{1/2}$ is constant along the trajectory and $\dt U = \dt P (P^TP)^{-1/2}$. Similarly, $W^T \dt W = 0$ gives $\dt V = \dt W (W^TW)^{-1/2}$ and
$\dt \Sigma = (P^TP)^{1/2} \dt S (W^TW)^{1/2}$.
Using the identities $UU^T = P(P^TP)^{-1}P^T$ and $VV^T = W(W^TW)^{-1}W^T$, the classical DLRA coupled equations~\eqref{eq:coupled_equations} become
\begin{align*}
\dt P (P^TP)^{-1/2} &= (I - P(P^TP)^{-1}P^T)\, F(PSW^T)\, W(W^TW)^{-1/2}\, \Sigma^{-1}, \\
(P^TP)^{1/2} \dt S (W^TW)^{1/2} &= (P^TP)^{-1/2} P^T F(PSW^T) W (W^TW)^{-1/2}, \\
\dt W (W^TW)^{-1/2} &= (I - W(W^TW)^{-1}W^T)\, F(PSW^T)^T\, P(P^TP)^{-1/2}\, \Sigma^{-T},
\end{align*}
and substituting $\Sigma^{-1} = (W^TW)^{-1/2} S^{-1} (P^TP)^{-1/2}$ yields~\eqref{eq:ortho_sketch_coupled} after cancellation of the $(P^TP)^{\pm 1/2}$ and $(W^TW)^{\pm 1/2}$ factors.
\end{proof}

\section{Oblique tangent space projection} \label{sec:oblique}

A natural approach to incorporate sketching into DLRA is to sketch the Galerkin condition~\eqref{eq:galerkin} that defines the approximation, in the spirit of randomized Galerkin methods for model reduction~\citep{balabanov2019randomized, balabanov2021randomized} or the Petrov--Galerkin condition in Krylov subspace methods~\citep{balabanov2022randomized, dedamas2025randomizedorthogonalizationkrylovsubspace}. For $Y = PSW^T \in \Mr^{\Theta, \Omega}$ (see Section~\ref{subsec:bijective_map}), it is natural to impose the following \emph{sketch Galerkin condition}:
\begin{align} \label{eq:sketch_galerkin_oblique}
    &\scal{\Theta (\dt Y - F(Y)) \Omega^T, \, \Theta \delta Y \Omega^T} = 0, \qquad \delta Y \in \mathcal T_{Y(t)} \mathcal M_r^{\Theta, \Omega}.
\end{align}
In this section, we show that this sketch Galerkin condition is the classical DLRA transported to the sketch space (Lemma~\ref{lem:oblique_coupled}); from it we derive the resulting oblique tangent space projection as the full-space lift, show that it is equivalent to the classical DLRA exactly when a sketched orthogonality condition holds (Proposition~\ref{prop:oblique_equivalence}), met up to the sketching error when the problem is low-rank compatible in the sense of Definition~\ref{def:lrc_approx_rk}, and analyze how the dynamics deviate otherwise.

\paragraph{Standing conventions for this section.} Throughout Section~\ref{sec:oblique} we keep $P \in \mathrm{St}_\Theta(m, r) \subset \R^{m \times r}$ and $W \in \mathrm{St}_\Omega(n, r) \subset \R^{n \times r}$, so that $\Theta P \in \R^{\ell \times r}$ and $\Omega W \in \R^{\ell \times r}$ are tall ($\ell \geq r$) with orthonormal columns. For these to be well defined, $\Theta P$ and $\Omega W$ must have full column rank $r$, which holds by Assumption~\ref{ass:embedding}. Under sketch-orthogonality, Remark~\ref{rem:pinv_notation} gives
\begin{equation} \label{eq:pinv_convention}
    (\Theta P)^+ \;=\; (\Theta P)^T \;=\; P^T \Theta^T, \qquad (\Omega W)^+ \;=\; (\Omega W)^T \;=\; W^T \Omega^T,
\end{equation}
where $^+$ denotes the Moore--Penrose pseudo-inverse (cf.~Remark~\ref{rem:pinv_notation}). Since $\Theta P$ and $\Omega W$ are tall ($\ell \geq r$) with full column rank, $(\Theta P)^+$ and $(\Omega W)^+$ are genuine \emph{left} pseudo-inverses; under sketch-orthogonality, they coincide with $(\Theta P)^T$ and $(\Omega W)^T$ respectively.

The analysis of Section~\ref{sec:oblique} is pointwise in time: each statement fixes a state $Y \in \Mr^{\Theta, \Omega}$ together with a single pair $(\Theta, \Omega)$, and concerns the oblique sketch DLRA equation at that instant. In the discrete integrator of Section~\ref{sec:integrators}, a fresh pair $(\Theta, \Omega)$ is drawn at every macro step, and the results of this section apply independently at each step.

\subsection{Oblique coupled equations} \label{subsec:oblique_coupled}

We now show that the sketch Galerkin condition~\eqref{eq:sketch_galerkin_oblique} is the classical DLRA condition transported to the sketch space: it drives the sketched state $\Theta Y \Omega^T$ by the sketched vector field $\Theta F(Y) \Omega^T$, as the next lemma makes precise.

\begin{lemma}[Oblique coupled equations] \label{lem:oblique_coupled}
Let $Y = P S W^T \in \mathcal M_r^{\Theta, \Omega}$ and set $\widehat F := \Theta\, F(P S W^T)\, \Omega^T \in \R^{\ell \times \ell}$. The sketch Galerkin condition~\eqref{eq:sketch_galerkin_oblique} uniquely determines the sketched velocities,
\begin{equation} \label{eq:oblique_sketched}
\begin{aligned}
&\dt S = (\Theta P)^T \widehat F\, (\Omega W), \\
&\Theta \dt P = \bigl(I_\ell - (\Theta P)(\Theta P)^T\bigr)\widehat F\,(\Omega W)\, S^{-1}, \\
&\Omega \dt W = \bigl(I_\ell - (\Omega W)(\Omega W)^T\bigr)\widehat F^{T}(\Theta P)\, S^{-T},
\end{aligned}
\end{equation}
which are the classical DLRA coupled equations~\eqref{eq:coupled_equations} for the sketched state $\Theta Y \Omega^T = (\Theta P) S (\Omega W)^T \in \R^{\ell \times \ell}$, with orthonormal bases $\Theta P, \Omega W$ and a vector field $\widehat F$. The factor velocities $\dt P, \dt W$ themselves are not determined by~\eqref{eq:sketch_galerkin_oblique} --- any increment with columns in $\ker\Theta$ (resp.\ $\ker\Omega$) leaves it unchanged. The lift realized by the integrators of Section~\ref{sec:integrators} replaces the sketch-space orthogonal projectors $I_\ell - (\Theta P)(\Theta P)^T$ and $I_\ell - (\Omega W)(\Omega W)^T$ by the full-space oblique projectors $I - P(\Theta P)^+\Theta$ and $I - W(\Omega W)^+\Omega$, giving $\dt Y = \dt P\, S W^T + P \dt S\, W^T + P S\, \dt W^T$ with
\begin{equation} \label{eq:oblique_coupled}
\begin{aligned}
&\dt S = (\Theta P)^+ \Theta F(P S W^T) \Omega^T (\Omega W)^{+T}, \\
&\dt P = (I - P (\Theta P)^+ \Theta) F(P S W^T) \Omega^T (\Omega W)^{+T} S^{-1}, \\
&\dt W = (I - W (\Omega W)^+ \Omega) F(P S W^T)^T \Theta^T (\Theta P)^{+T} S^{-T}.
\end{aligned}
\end{equation}
\end{lemma}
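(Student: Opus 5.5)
The plan is to push the sketch Galerkin condition~\eqref{eq:sketch_galerkin_oblique} entirely into the $\ell \times \ell$ sketch space and then invoke the classical derivation of~\eqref{eq:coupled_equations}. Write $\widehat Y := \Theta Y \Omega^T = \widehat P S \widehat W^T$ with $\widehat P := \Theta P$ and $\widehat W := \Omega W$. These have orthonormal columns by sketch-orthogonality, and $\widehat Y$ has rank $r$ by Assumption~\ref{ass:embedding}. The map $L: \delta Y \mapsto \Theta\, \delta Y\, \Omega^T$ sends a tangent vector $\delta P S W^T + P \delta S W^T + P S \delta W^T$ of $\T_Y \Mr^{\Theta,\Omega}$ to $(\Theta \delta P) S \widehat W^T + \widehat P \delta S \widehat W^T + \widehat P S (\Omega \delta W)^T$. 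The sketch gauge $\delta P^T \Theta^T \Theta P = 0$ reads $(\Theta\delta P)^T \widehat P = 0$, so this image is a tangent vector of the standard manifold $\mathcal M_r(\R^{\ell\times\ell})$ at $\widehat Y$, in the classical gauge.

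First I show that $L$ maps $\T_Y\Mr^{\Theta,\Omega}$ onto $\T_{\widehat Y}\mathcal M_r(\R^{\ell\times\ell})$. Since $\Theta$ has full row rank, any $\widehat{\delta P} \in \R^{\ell\times r}$ with $\widehat{\delta P}^T \widehat P = 0$ can be written as $\Theta \delta P$. One can take $\delta P = \Theta^+ \widehat{\delta P}$, and this $\delta P$ automatically satisfies the sketch gauge. The same argument applies to $\Omega$, and $\delta S$ is free. Hence, as $\delta Y$ ranges over $\T_Y\Mr^{\Theta,\Omega}$, the test directions $\Theta \delta Y \Omega^T$ in~\eqref{eq:sketch_galerkin_oblique} range over the full tangent space at $\widehat Y$.

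Next, $\Theta \dt Y \Omega^T = \frac{d}{dt}\widehat Y$ lies in that tangent space, since the velocity $\dt Y$ is itself taken in $\T_Y \Mr^{\Theta,\Omega}$. The condition therefore becomes the classical Galerkin condition~\eqref{eq:galerkin} in $\R^{\ell\times\ell}$:
\begin{equation*}
\scal{\tfrac{d}{dt}\widehat Y - \widehat F,\ \delta \widehat Y} = 0 \qquad \text{for all } \delta \widehat Y \in \T_{\widehat Y}\mathcal M_r(\R^{\ell\times\ell}).
\end{equation*}
This gives $\tfrac{d}{dt}\widehat Y = \mathcal P_{\widehat Y}\widehat F$. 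The classical factor derivation behind~\eqref{eq:coupled_equations}, with bases $\widehat P, \widehat W$, core $S$, gauge $\widehat P^T \Theta\dt P = 0$ and $\widehat W^T \Omega\dt W = 0$, then yields~\eqref{eq:oblique_sketched}. Uniqueness of $(\dt S, \Theta\dt P, \Omega\dt W)$ follows from the uniqueness of the gauge-fixed representation of tangent vectors at the rank-$r$ matrix $\widehat Y$, together with the invertibility of $S$. Non-uniqueness of $\dt P$ is immediate: adding $N$ with columns in $\ker\Theta$ leaves $\Theta\dt P$ unchanged, and it also preserves the sketch gauge.

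For the lift~\eqref{eq:oblique_coupled}, substitute~\eqref{eq:pinv_convention}. The $\dt S$ equation is then literally the first line of~\eqref{eq:oblique_sketched}. For $\dt P$, I will check that applying $\Theta$ gives $\Theta(I - P(\Theta P)^+\Theta) = (I_\ell - \widehat P\widehat P^T)\Theta$, which recovers the second line; the third line is handled the same way. I also verify the sketch gauge: $\widehat P^T \Theta (I - P\widehat P^T\Theta) = \widehat P^T\Theta - \widehat P^T\Theta = 0$, so the lift is an admissible representative of the family described above.

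The main obstacle is the surjectivity step, which needs full row rank of $\Theta$ and $\Omega$ together with rank $r$ of $\widehat Y$. It is also the step where one has to take care that the gauge-constrained sketched increments are exactly the classical tangent parameterization. Everything else is the classical computation.
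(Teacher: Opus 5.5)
Your proposal is correct and follows essentially the same route as the paper's proof. Both transport the sketch Galerkin condition to the tangent space at $\Theta Y \Omega^T$ in $\R^{\ell \times \ell}$, using full row rank of $\Theta,\Omega$ for surjectivity and noting that the sketch gauge becomes the Euclidean gauge on $\Theta P, \Omega W$; both then apply the classical derivation and check the lift by applying $\Theta$ and $\Omega$ --- and you spell out the surjectivity (via $\Theta^+$) and the gauge of the lift more explicitly than the paper does. The one step the paper adds is identifying this lift with the integrators' K/L substeps through the one-line check $\dot P S + P \dot S = F(PSW^T)\,\Omega^T(\Omega W)^{+T}$ (and its $L$ analogue); you should include it, since the statement asserts that this is the lift realized in Section~\ref{sec:integrators}.
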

\begin{proof}
Written out, \eqref{eq:sketch_galerkin_oblique} reads $\scal{\Theta \dt Y \Omega^T - \widehat F,\ \Theta \delta Y \Omega^T} = 0$ for all $\delta Y \in \mathcal T_Y \mathcal M_r^{\Theta, \Omega}$. Since $\Theta$ and $\Omega$ have full row rank, the map $\delta Y \mapsto \Theta \delta Y \Omega^T$ sends $\mathcal T_Y \mathcal M_r^{\Theta, \Omega}$ onto the tangent space at $\widehat Y := (\Theta P) S (\Omega W)^T$ of the rank-$r$ manifold in $\R^{\ell \times \ell}$, and the sketch gauge $P^T \Theta^T \Theta \dt P = 0$ becomes the Euclidean gauge $(\Theta P)^T (\Theta \dt P) = 0$, and similarly $(\Omega W)^T (\Omega \dt W) = 0$ --- the gauge keeps $\Theta P$ and $\Omega W$ orthonormal. Hence~\eqref{eq:sketch_galerkin_oblique} is the classical Galerkin condition~\eqref{eq:galerkin} for $\widehat Y$ driven by $\widehat F$, and the derivation of~\eqref{eq:coupled_equations} yields~\eqref{eq:oblique_sketched}. Applying $\Theta$ and $\Omega$ to~\eqref{eq:oblique_coupled} returns~\eqref{eq:oblique_sketched} (using $(\Theta P)^+ = (\Theta P)^T$ and $(\Omega W)^{+T} = \Omega W$ under sketch-orthogonality), so~\eqref{eq:oblique_coupled} is a lift of the determined velocities; it is the one with $\dt K := \dt P\, S + P \dt S = F(K W^T) \Omega^T (\Omega W)^{+T}$, i.e.\ the K/L substeps of Section~\ref{sec:integrators}.
\end{proof}

\begin{lemma}[Oblique tangent space projection] \label{lem:oblique_projection}
Let $Y = P S W^T \in \Mr^{\Theta, \Omega}$, and introduce the two oblique projectors
$$
P_{\Im(Y)}^{\Theta} \;:=\; P\,(\Theta P)^+\,\Theta, \qquad P_{\Im(Y^T)}^{\Omega} \;:=\; W\,(\Omega W)^+\,\Omega,
$$
onto $\Im(Y)$ along $\mathrm{Ker}\big((\Theta P)^+ \Theta\big)$ and onto $\Im(Y^T)$ along $\mathrm{Ker}\big((\Omega W)^+ \Omega\big)$, respectively. The oblique projection onto the tangent space $\mathcal T_Y \mathcal M_r^{\Theta, \Omega}$ associated with these one-sided projectors, denoted $\mathcal P_Y^{\Theta, \Omega}$, is given by
$$\mathcal P_Y^{\Theta, \Omega} Z \;=\; P_{\Im(Y)}^{\Theta}\, Z \,+\, Z\, (P_{\Im(Y^T)}^{\Omega})^T \,-\, P_{\Im(Y)}^{\Theta}\, Z\, (P_{\Im(Y^T)}^{\Omega})^T,$$
or equivalently,
$$\mathcal P_Y^{\Theta, \Omega} Z = P P^T \Theta^T \Theta Z - P P^T \Theta^T \Theta Z \Omega^T \Omega W W^T + Z \Omega^T \Omega W W^T.$$
\end{lemma}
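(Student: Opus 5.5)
The plan is to verify three things: the two one-sided operators are oblique projectors with the stated ranges and kernels; the combined operator $\mathcal P_Y^{\Theta,\Omega}$ is idempotent with range exactly $\T_Y\Mr^{\Theta,\Omega}$ (which equals $\T_Y\Mr$, Section~\ref{subsec:bijective_map}); and the two displayed formulas agree.

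\emph{One-sided projectors.} By~\eqref{eq:pinv_convention}, $(\Theta P)^+\Theta P = I_r$ under Assumption~\ref{ass:embedding}. Hence
$$\bigl(P(\Theta P)^+\Theta\bigr)^2 = P\,\bigl[(\Theta P)^+\Theta P\bigr]\,(\Theta P)^+\Theta = P(\Theta P)^+\Theta .$$
Its range lies in $\Range(P) = \Im(Y)$. It acts as the identity on $\Range(P)$, since $P(\Theta P)^+\Theta P x = Px$, so its range is all of $\Im(Y)$. Its kernel is $\ker\bigl((\Theta P)^+\Theta\bigr)$ because $P$ is injective. The same argument applies to $P^\Omega_{\Im(Y^T)}$.

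\emph{Equivalent form.} Substitute $(\Theta P)^+ = P^T\Theta^T$ and $(\Omega W)^+ = W^T\Omega^T$. This gives $P^{\Theta}_{\Im(Y)} = PP^T\Theta^T\Theta$ and $(P^{\Omega}_{\Im(Y^T)})^T = \Omega^T\Omega WW^T$. The second expression for $\mathcal P_Y^{\Theta,\Omega}$ then follows directly from the first.

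\emph{Combined operator.} Write $\Pi_1 = P^\Theta_{\Im(Y)}$ and $\Pi_2 = (P^\Omega_{\Im(Y^T)})^T$, so that $\mathcal P Z = \Pi_1 Z + Z\Pi_2 - \Pi_1 Z\Pi_2$. Left and right multiplications commute, and each is idempotent. Therefore
$$I - \mathcal P = (I-\Pi_1)\,\cdot\,(I-\Pi_2),$$
meaning $Z \mapsto (I-\Pi_1)Z(I-\Pi_2)$. This is a product of commuting idempotents, so it is idempotent, and hence $\mathcal P$ is idempotent. For the range, $\Pi_1 Z$ has columns in $\Range(P)$ and $Z\Pi_2$ has rows in $\Range(W)$. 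Every such matrix lies in $\T_Y\Mr$, because $\T_Y\Mr = \{UA + BV^T\}$ with $\Range(U) = \Range(P)$ and $\Range(V) = \Range(W)$. Conversely, take $\delta Y = PA + BW^T$. Then $(I-\Pi_1)\delta Y(I-\Pi_2) = 0$, since $(I-\Pi_1)P = 0$ and $W^T(I-\Pi_2) = 0$; the latter holds because $W^T\Omega^T\Omega W = I$. So $\mathcal P\delta Y = \delta Y$, and the range is the full tangent space.

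\emph{Link to Lemma~\ref{lem:oblique_coupled}.} It remains to show this projection is the one realized by~\eqref{eq:oblique_coupled}. Assemble
$$\dt Y = \dt P\,SW^T + P\dt S\,W^T + PS\dt W^T$$
from~\eqref{eq:oblique_coupled} with $F = F(Y)$. The $\dt P$ term gives $(I-\Pi_1)F\Pi_2$. The $\dt S$ term gives $\Pi_1 F\Pi_2$. The $\dt W$ term gives $\Pi_1 F(I-\Pi_2)$, after transposing and cancelling $S^{-T}S^T$. Summing,
$$\dt Y = F\Pi_2 + \Pi_1 F - \Pi_1 F\Pi_2 = \mathcal P_Y^{\Theta,\Omega}F .$$
This identifies the oblique projection as the one driving the lifted dynamics.

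The main obstacle is bookkeeping, not depth: transposes and pseudo-inverse conventions must be kept consistent in the $\dt W$ term. In particular, $(\Omega W)^{+T} = \Omega W$ must be used so that $\Omega^T(\Omega W)^{+T}W^T = \Pi_2$. The oblique character, meaning $\mathcal P$ is not self-adjoint, needs no proof; it only affects the wording ``oblique'' in the statement.
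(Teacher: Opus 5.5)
Your proposal is correct, and its core is exactly the paper's proof: the paper assembles $\dot Y = \dot P\,SW^T + P\dot S\,W^T + PS\,\dot W^T$ from the lifted equations~\eqref{eq:oblique_coupled}, extends from $F(Y)$ to arbitrary $Z$ by linearity, and gets the second formula from the convention $(\Theta P)^+ = P^T\Theta^T$, $(\Omega W)^+ = W^T\Omega^T$. Your additional checks go beyond the paper, which takes the word ``projection'' for granted: you show that the one-sided operators are projectors with the stated ranges and kernels, and that $\mathcal P_Y^{\Theta,\Omega}$ is idempotent with range exactly $\T_Y\Mr$ via $Z - \mathcal P_Y^{\Theta,\Omega}Z = (I-\Pi_1)Z(I-\Pi_2)$. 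These checks are a genuine strengthening rather than a different route.
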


\begin{proof}
Substituting the coupled equations of Lemma~\ref{lem:oblique_coupled} into the identity $\mathcal P_Y^{\Theta, \Omega} F(P S W^T) = \dt P\, S W^T + P \dt S\, W^T + P S\, \dt W^T$ and extending to all $Z \in \Rmn$ by linearity yields the first expression; the compact form follows by factoring $P_{\Im(Y)}^{\Theta}$ on the left and $P_{\Im(Y^T)}^{\Omega}$ on the right. The second identity follows directly from Remark~\ref{rem:pinv_notation}.
\end{proof}

In sketch coordinates, this oblique projection is the ordinary orthogonal one: for every $Z \in \Rmn$,
$$\Theta\,(\mathcal P_Y^{\Theta, \Omega} Z)\,\Omega^T = \mathcal P_{\Theta Y \Omega^T}\!\left(\Theta Z \Omega^T\right),$$
where $\mathcal P_{\Theta Y \Omega^T}$ denotes the standard orthogonal tangent-space projection at the rank-$r$ matrix $\Theta Y \Omega^T \in \R^{\ell \times \ell}$. This is the projection-level form of Lemma~\ref{lem:oblique_coupled}.

In the rest of the paper, the \emph{oblique sketch DLRA} refers to the projected differential equation $\dt Y(t) = \mathcal P_{Y(t)}^{\Theta, \Omega} F(Y(t))$ with $Y(0) = Y_0 \in \Mr^{\Theta, \Omega}$ --- that is, the sketched Galerkin condition~\eqref{eq:sketch_galerkin_oblique} closed by the canonical lift of Lemma~\ref{lem:oblique_coupled}.

\begin{remark}[One-sided sketching] \label{rem:one_sided}
The framework covers \emph{one-sided sketching} as the special case $\Theta = I_m$ (row-only sketching) or $\Omega = I_n$ (column-only sketching). With $\Theta = I_m$, the sketch-orthogonality condition $(\Theta P)^T (\Theta P) = I_r$ collapses to $P^T P = I_r$, so $P$ becomes orthonormal in the classical sense and $(\Theta P)^+ = P^+ = P^T$; the identity in Remark~\ref{rem:pinv_notation} and all derivations of Section~\ref{sec:oblique} remain valid verbatim because they use only that $\Theta P$ is tall with full column rank and satisfies sketch-orthogonality, both of which are trivially inherited. Equations~\eqref{eq:oblique_coupled} then specialize on the $P$-side to the ordinary orthogonal formulas, while the $W$-side remains oblique:
$$
\dt P = (I - P P^T) F(P S W^T) \Omega^T (\Omega W)^{+T} S^{-1}, \quad
\dt W = (I - W (\Omega W)^+ \Omega) F(P S W^T)^T P\, S^{-T},
$$
$$
\dt S = P^T F(P S W^T) \Omega^T (\Omega W)^{+T},
$$
and symmetrically for $\Omega = I_n$. Accordingly, the oblique projector on the unsketched side reduces to the standard orthogonal projector ($P^\Theta = U U^T$ when $\Theta = I_m$, or $P^\Omega = V V^T$ when $\Omega = I_n$), so the equivalence-with-DLRA condition of Proposition~\ref{prop:oblique_equivalence} drops the OSE requirement on the unsketched side and is imposed only on the remaining sketch --- which must still embed the column or row spaces of the relevant tangent and residual matrices (cf.\ Proposition~\ref{prop:oblique_equivalence} and~\eqref{eq:oblique_residual}), not merely $\Range(P)$ or $\Range(W)$. One-sided sketching therefore interpolates between the orthogonal sketch DLRA of Section~\ref{sec:orthogonal} (when both $\Theta = I_m$ and $\Omega = I_n$) and the two-sided oblique sketch DLRA: it corresponds unconditionally to the classical DLRA on the unsketched side, and inherits the LRC- and OSE-dependent behavior of Section~\ref{subsec:equivalence} on the sketched side.
\end{remark}

\subsection{Equivalence with the classical DLRA in the low-rank compatible regime} \label{subsec:equivalence}

We now discuss the conditions under which the oblique sketch DLRA is (approximately) equivalent to the classical DLRA, and what happens when these conditions fail.

\begin{proposition} \label{prop:oblique_equivalence}
Let $\Theta \in \R^{\ell \times m}$ and $\Omega \in \R^{\ell \times n}$ be two realizations of $\mathrm{OSE}(2r, \epsilon, \delta)$ embedding the column and row spaces of $Z := \dt Y(t) - \proj{Y(t)}{F(Y(t))}$. Then solving the oblique sketch DLRA equation
$$\dt{Y}(t) = \mathcal P_{Y(t)}^{\Theta, \Omega} F(Y(t)), \qquad Y(0) = Y_0 \in \Mr^{\Theta, \Omega}$$
is equivalent to solving the DLRA if and only if
\begin{equation} \label{eq:condition_oblique_good}
\scal{\Theta \projperp{Y(t)}{F(Y(t))} \Omega^T, \Theta \delta Y(t) \Omega^T} = 0 \qquad \forall \delta Y(t) \in \mathcal T_{Y(t)} \Mr,
\end{equation}
where the perpendicular projection $\mathcal P_{Y}^{\perp} = I - \mathcal P_{Y}$ is defined from~\eqref{eq:tangent_proj}.
\end{proposition}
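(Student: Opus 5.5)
The plan is to compare the two Galerkin conditions pointwise in time at a common state $Y = PSW^T \in \Mr^{\Theta,\Omega}$. The oblique sketch DLRA is characterized, via Lemma~\ref{lem:oblique_coupled} and Lemma~\ref{lem:oblique_projection}, by the sketched Galerkin condition~\eqref{eq:sketch_galerkin_oblique}:
$$\scal{\Theta(\dt Y - F(Y))\Omega^T, \Theta\delta Y\Omega^T}=0 \quad \forall\,\delta Y\in\T_Y\Mr^{\Theta,\Omega}=\T_Y\Mr,$$
with $\dt Y$ in the tangent space. The classical DLRA is $\dt Y = \proj{Y}{F(Y)}$. I decompose
$$\dt Y - F(Y) = Z - \projperp{Y}{F(Y)}, \qquad Z := \dt Y - \proj{Y}{F(Y)} \in \T_Y\Mr.$$
The sketched Galerkin condition then reads
$$\scal{\Theta Z\Omega^T,\Theta\delta Y\Omega^T} = \scal{\Theta\projperp{Y}{F(Y)}\Omega^T,\Theta\delta Y\Omega^T} \quad \text{for all tangent } \delta Y.$$
Equivalence with DLRA means precisely $Z = 0$.

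For the direction ``condition~\eqref{eq:condition_oblique_good} implies equivalence'': if~\eqref{eq:condition_oblique_good} holds, the right-hand side vanishes, so $\scal{\Theta Z\Omega^T,\Theta\delta Y\Omega^T}=0$ for all $\delta Y\in\T_Y\Mr$. Since $Z$ itself is tangent, I take $\delta Y = Z$, which gives $\norm{\Theta Z\Omega^T}_F = 0$. Corollary~\ref{cor:two_sided} applied with $Z=Z'$ gives $\norm{\Theta Z\Omega^T}_F^2 \ge (1-\epsilon)^2\norm{Z}_F^2$. Its hypothesis holds because $Z$ is a tangent vector, so its column and row spaces each have dimension at most $2r$ (columns in $\Range(U)+\Range(\delta U)$). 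This is exactly why the statement assumes $\Theta,\Omega$ embed these spaces with OSE parameter $2r$. Hence $Z=0$, i.e.\ $\dt Y = \proj{Y}{F(Y)}$. Since the right-hand sides of the two ODEs then agree at every state along the trajectory, uniqueness of solutions shows the trajectories coincide.

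For the converse: if the oblique solution coincides with the DLRA solution, then $Z=0$ along the trajectory. The left-hand side of the identity above is then zero, so the right-hand side vanishes for all tangent $\delta Y$, which is exactly~\eqref{eq:condition_oblique_good}. Here I use $\T_Y\Mr^{\Theta,\Omega}=\T_Y\Mr$ from Section~\ref{subsec:bijective_map}, so that the test spaces agree.

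The main obstacle is the injectivity step $\Theta Z\Omega^T = 0 \Rightarrow Z=0$. A rectangular two-sided sketch is not injective on all of $\Rmn$, so the argument relies on $Z$ lying in a low-dimensional structured set whose column and row spaces are embedded; this is exactly the hypothesis of the proposition. A subtle point is that $Z$ depends on $\dt Y$, which itself depends on the sketch. I would therefore invoke the embedding as a deterministic assumption on the realization, as stated, rather than through the probabilistic independence in Corollary~\ref{cor:two_sided}. Alternatively, one can argue directly: since $\Theta$ is injective on the column space of $Z$ and $\Omega$ on its row space, writing $Z = AB^T$ with $A$, $B$ of full column rank gives $\Theta A$ and $\Omega B$ of full column rank, so $\Theta A B^T\Omega^T = 0$ forces $B^T = 0$.
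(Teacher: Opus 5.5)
Your proposal is correct and follows essentially the same route as the paper: you decompose $\dt Y - F(Y) = Z - \projperp{Y}{F(Y)}$, test the sketched Galerkin identity with $\delta Y = Z$ and invoke Corollary~\ref{cor:two_sided} with $d = 2r$ for one direction, and use that $Z = 0$ collapses the identity to~\eqref{eq:condition_oblique_good} for the converse. Your closing factorization argument $Z = AB^T$ is a nice addition. There $(\Theta A)(\Omega B)^T = 0$ gives $\Omega B = 0$, hence $B = 0$ by injectivity; this needs only injectivity on the realized column and row spaces, so it sidesteps the independence hypothesis of Corollary~\ref{cor:two_sided}, which the paper's proof glosses over.
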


\begin{proof}
By Lemma~\ref{lem:oblique_coupled}, the oblique sketch DLRA satisfies the sketch Galerkin condition~\eqref{eq:sketch_galerkin_oblique}. Decomposing $F(Y(t)) = \proj{Y(t)}{F(Y(t))} + \projperp{Y(t)}{F(Y(t))}$ and recalling that $\T_{Y(t)} \Mr^{\Theta, \Omega} = \T_{Y(t)} \Mr$, we obtain for every $\delta Y(t) \in \T_{Y(t)} \Mr$
\begin{align*}
0 &= \scal{\Theta (\dt Y(t) - F(Y(t))) \Omega^T, \, \Theta \delta Y(t) \Omega^T} \\
  &= \scal{\Theta Z \Omega^T, \, \Theta \delta Y(t) \Omega^T} - \scal{\Theta \projperp{Y(t)}{F(Y(t))} \Omega^T, \, \Theta \delta Y(t) \Omega^T},
\end{align*}
where $Z := \dt Y(t) - \proj{Y(t)}{F(Y(t))} \in \T_{Y(t)} \Mr$ is a rank-$2r$ tangent vector. Evaluating this identity at $\delta Y(t) = Z$ and using that $\Theta$ and $\Omega$ embed the column and row spaces of $Z$ (Corollary~\ref{cor:two_sided}, $d = 2r$),
$$(1-\epsilon)^2 \norm{Z}_F^2 \;\leq\; \norm{\Theta Z \Omega^T}_F^2 \;=\; \scal{\Theta \projperp{Y(t)}{F(Y(t))} \Omega^T, \, \Theta Z \Omega^T}.$$
Hence condition~\eqref{eq:condition_oblique_good} forces $Z = 0$, while conversely $Z = 0$ collapses the identity to~\eqref{eq:condition_oblique_good}. The oblique scheme therefore coincides with the classical DLRA if and only if~\eqref{eq:condition_oblique_good} holds.
\end{proof}

\begin{remark}[Per-step vs.\ trajectory probability] The embedding hypothesis of Proposition~\ref{prop:oblique_equivalence} holds, for a fixed target subspace, with probability at least $1 - 2\delta$ per draw of $(\Theta, \Omega)$. Strictly speaking, the subspaces of $Z$ depend on the realized draw, so this per-draw estimate applies to subspaces fixed before the draw and should be read as a guideline for sizing $\ell$ rather than as a self-contained guarantee --- which is why the proposition is phrased deterministically, as a hypothesis on the realized sketches. In the discrete integrator of Section~\ref{sec:integrators}, a fresh independent pair is drawn at each of the $N$ macro steps; a union bound over the steps then requires the per-step OSE failure probability to be of order $\delta/N$ to keep the global failure probability at $O(\delta)$. Since $\delta$ enters logarithmically in the required sketch size, this only mildly increases $\ell$ in practice.
\end{remark}

We now turn to the large-residual regime: the oblique sketch DLRA then solves a perturbed equation whose residual does not vanish, and is therefore no longer a faithful approximation of the classical DLRA.

Following the decomposition used in the proof of Proposition~\ref{prop:oblique_equivalence}, we write $F(Y(t)) = \proj{Y(t)}{F(Y(t))} + \projperp{Y(t)}{F(Y(t))}$ and recast the sketch Galerkin condition~\eqref{eq:sketch_galerkin_oblique} as
\begin{equation} \label{eq:oblique_residual}
\scal{\Theta (\dt Y(t) - \proj{Y(t)}{F(Y(t))}) \Omega^T, \, \Theta \delta Y \Omega^T} = \scal{\Theta \projperp{Y(t)}{F(Y(t))} \Omega^T, \, \Theta \delta Y \Omega^T}, \quad \forall \delta Y \in \T_{Y(t)} \Mr.
\end{equation}
For each fixed tangent direction $\delta Y$, the left-hand side of~\eqref{eq:oblique_residual} pairs two tangent vectors of rank at most $2r$; whenever $\Theta$ and $\Omega$ embed their combined column and row spaces (of dimension at most $4r$), Corollary~\ref{cor:two_sided} (with $d = 4r$) shows that it reproduces the classical Galerkin pairing $\scal{\dt Y(t) - \proj{Y(t)}{F(Y(t))}, \, \delta Y}$ that drives the DLRA, up to an additive error $(2\epsilon + \epsilon^2) \norm{\dt Y(t) - \proj{Y(t)}{F(Y(t))}}_F \norm{\delta Y}_F$. The right-hand side is what we call the \emph{oblique residual}: when the problem is $(r,k)$-low-rank compatible, $\projperp{Y(t)}{F(Y(t))}$ has rank at most $k$, so for each fixed tangent direction $\delta Y$ the relevant column and row spaces have dimension at most $2r+k$ and an $\mathrm{OSE}(2r+k)$ controls this residual through the two-sided embedding (Corollary~\ref{cor:two_sided}, $d = 2r+k$), with probability $1 - 2\delta$: explicitly, since $\scal{\projperp{Y(t)}{F(Y(t))}, \, \delta Y} = 0$,
$$\bigl|\scal{\Theta \projperp{Y(t)}{F(Y(t))} \Omega^T, \, \Theta \delta Y \Omega^T}\bigr| \;\leq\; (2\epsilon + \epsilon^2)\, \norm{\projperp{Y(t)}{F(Y(t))}}_F\, \norm{\delta Y}_F.$$
Otherwise, $\projperp{Y(t)}{F(Y(t))}$ is large relative to what a sketch of size $\ell \ll m, n$ can control, so the oblique residual on the right-hand side of~\eqref{eq:oblique_residual} is in general a non-vanishing perturbation of the classical Galerkin condition. This perturbation drives the dynamics away from the classical DLRA solution and is confirmed numerically in Section~\ref{subsec:vlasov_poisson}, where the oblique approach leads to a visible energy drift on the Vlasov--Poisson equation.

\begin{remark}[Approximate low-rank compatibility]
The exact $\eta = 0$ setting is brittle: most problems of practical interest are only approximately low-rank compatible in the sense of Definition~\ref{def:lrc_approx_rk}, i.e., $(\eta, r, k)$-LRC with a small but non-zero tolerance $\eta$. In this case the oblique residual in~\eqref{eq:oblique_residual} is expected to remain of order $\eta$, so the oblique sketch DLRA should stay close to the classical DLRA; bounding it rigorously, however, requires more than the oblivious subspace embedding property (the part of the residual beyond rank $k$ is not embedded), and turning the residual control into a trajectory error bound would further require a perturbation argument for the projected ODE --- neither of which we pursue here.
\end{remark}

\subsection{Recap: oblique vs.\ orthogonal} \label{subsec:comparison}

The key differences between the two approaches are summarized in Table~\ref{tab:comparison}.

\begin{table}[htbp]
\centering
\begin{tabular}{@{}lcc@{}}
\toprule
 & \textbf{Oblique sketch DLRA} & \textbf{Orthogonal sketch DLRA} \\
\midrule
\textbf{Solves DLRA on} & Sketched problem $\Theta F \Omega^T$ & True problem $F$ \\
\textbf{What is sketched} & Full tangent projection & Factor orthogonalizations \\
\textbf{Underlying projector} & $\mathcal P_Y^{\Theta,\Omega}$ (Prop.~\ref{prop:oblique_equivalence}) & $\mathcal P_Y$ (classical) \\
\textbf{Equivalent to DLRA} & Iff~\eqref{eq:condition_oblique_good} (Prop.~\ref{prop:oblique_equivalence}) & Always (deterministic) \\
\textbf{Large perpendicular residual} & Non-vanishing residual~\eqref{eq:oblique_residual} & No residual \\
\bottomrule
\end{tabular}
\caption{Comparison of the oblique and orthogonal sketch DLRA approaches.}
\label{tab:comparison}
\end{table}
In summary, the oblique approach is cheaper but only conditionally and approximately correct, while the orthogonal approach is always correct at the price of an extra randomized Gram--Schmidt per factor step. For problems known to be approximately low-rank compatible, the oblique approach is a reasonable choice; otherwise, the orthogonal approach should be preferred.

\section{Sketch DLRA integrators} \label{sec:integrators}

Classical DLRA integrators --- projector-splitting (KSL)~\citep{lubich2014projector}, BUG~\citep{ceruti2022unconventional}, and its augmented and rank-adaptive variants~\citep{ceruti2022rank} --- are built from K/L/S substeps composed with Householder QR extractions. Each lifts to a sketch integrator by (i) replacing the substep projectors as in Table~\ref{tab:substeps} and (ii) replacing the Householder QR by the randomized Gram--Schmidt procedure of Section~\ref{subsec:sketch_ortho}. The splitting order, Galerkin transfer, rank truncation, and adaptivity are inherited unchanged. Throughout, $h > 0$ is the time step, the current iterate $Y_0 = P_0 S_0 W_0^T$ is sketch-orthogonal, and we use the dot notation $\dt{x}$ to denote the time derivative of $x$. In general, a fresh independent pair $(\Theta, \Omega)$ may be drawn at each macro step, the state being re-expressed in the new sketch-orthogonal bases by RGS before the substeps are taken; in the experiments of Section~\ref{sec:experiments} we instead hold a single pair $(\Theta, \Omega)$ fixed over the whole trajectory.

\subsection{Sketch substeps and integrators} \label{subsec:substeps}

In each substep, one factor is evolved while the others are frozen; the projector acting on the right-hand side selects the orthogonal or oblique variant. The three substep ODEs and the RGS basis update are collected in Table~\ref{tab:substeps}; both columns collapse to the classical DLRA substeps when $\Theta = I_m$ and $\Omega = I_n$. The sketch-orthogonality of $P$ and $W$ makes $(\Theta P) \in \R^{\ell \times r}$ and $(\Omega W) \in \R^{\ell \times r}$ have orthonormal columns, so their left inverses $(\Theta P)^T = P^T \Theta^T$ and $(\Omega W)^T = W^T \Omega^T$ are well-defined (Remark~\ref{rem:pinv_notation}) and the oblique formulas are unambiguous.

\begin{table}[ht]
\centering
\small
\setlength{\tabcolsep}{6pt}
\renewcommand{\arraystretch}{1.35}
\begin{tabular}{@{}lll@{}}
\toprule
\textbf{Substep} & \textbf{Orthogonal sketch DLRA} & \textbf{Oblique sketch DLRA} \\
\midrule
K  & $\dt K = F(K W^T)\, W (W^T W)^{-1}$
   & $\dt K = F(K W^T)\, \Omega^T (\Omega W)^{+T}$ \\
L  & $\dt L = F(P L^T)^T\, P (P^T P)^{-1}$
   & $\dt L = F(P L^T)^T\, \Theta^T (\Theta P)^{+T}$ \\
S  & $\dt S = (P^T P)^{-1} P^T F(P S W^T) W (W^T W)^{-1}$
   & $\dt S = (\Theta P)^+ \Theta \, F(P S W^T) \, \Omega^T (\Omega W)^{+T}$ \\
\midrule
Basis from $K$ & $P = \RGS(K, \Theta)$, \;\; $S \leftarrow (P^T P)^{-1} P^T K$
               & $P = \RGS(K, \Theta)$, \;\; $S \leftarrow (\Theta P)^+ \Theta K$ \\
Basis from $L$ & $W = \RGS(L, \Omega)$, \;\; $S \leftarrow (W^T W)^{-1} W^T L$
               & $W = \RGS(L, \Omega)$, \;\; $S \leftarrow (\Omega W)^+ \Omega L$ \\
\bottomrule
\end{tabular}
\caption{Substeps of the sketch DLRA in orthogonal and oblique form. Bases $P \in \Rmr$ and $W \in \R^{n \times r}$ are sketch-orthogonal, $(\Theta P)^T (\Theta P) = I_r$ and $(\Omega W)^T (\Omega W) = I_r$. The classical DLRA substeps are recovered in both columns when $\Theta = I_m$ and $\Omega = I_n$, so that $(\Omega W)^+ \Omega = W^T$ and $(\Theta P)^+ \Theta = P^T$ collapse to the standard orthogonal projections.}
\label{tab:substeps}
\end{table}

Every classical DLRA integrator lifts to the sketch setting by picking one column of Table~\ref{tab:substeps} and using RGS for the basis extraction. Below, we report the two variants used in the experiments. The same recipe extends straightforwardly to the augmented BUG and rank-adaptive BUG integrators of~\citep{ceruti2022rank}, which we do not describe here.

\paragraph{Sketch projector-splitting integrator (sKSL).}
A Lie--Trotter step performs the sequence K, $-$S, L on $[0, h]$: the K-substep starts from $K(0) = P_0 S_0$ and produces $P_1 = \RGS(K(h), \Theta)$; the S-substep uses the \emph{negated} right-hand side; the L-substep ends with $W_1 = \RGS(L(h), \Omega)$~\citep{lubich2014projector}. A symmetric Strang splitting K/2, $-$S/2, L, $-$S/2, K/2 gives second order at the cost of one extra S-substep.

\paragraph{Sketch BUG integrator (sBUG).}
The K- and L-substeps are solved \emph{in parallel} from $(P_0, S_0, W_0)$, yielding $P_1 = \RGS(K(h), \Theta)$ and $W_1 = \RGS(L(h), \Omega)$~\citep{ceruti2022unconventional}. The S-substep is then run from the Galerkin transfer $S_{\mathrm{init}} = M S_0 N^T$ with
$$M = (P_1^T P_1)^{-1} P_1^T P_0, \quad N = (W_1^T W_1)^{-1} W_1^T W_0 \qquad \text{(orthogonal)},$$
$$M = (\Theta P_1)^+ (\Theta P_0), \quad N = (\Omega W_1)^+ (\Omega W_0) \qquad \text{(oblique)}.$$

\begin{remark}[Parallelism] \label{rem:parallelism}
In sBUG the K- and L-substeps depend only on $(P_0, S_0, W_0)$ and can be dispatched to separate devices. With RGS, each branch reduces to a tall-and-skinny sketch multiplication $\Theta K$ (Level-3 BLAS) followed by a small $\ell \times r$ QR, making it well-suited for GPUs; the $r \times r$ S-substep is negligible. The sKSL integrator is inherently sequential because its S-substep needs $P_1$ before L can start.
\end{remark}

\subsection{Advantages of sketch DLRA on modern hardware} \label{subsec:cost}

At leading order, the two integrators have the same basis-update flop count, $O(m r^2)$ vs.\ $O(m \ell r)$ with $\ell = O(r)$, so the practical speedup of the sketch approach is not explained by flop savings alone. It comes from three additional mechanisms that flop counts miss~\citep{demmel2012communication, balabanov2022randomized, higgins2025analysis}: (i) fewer \emph{global synchronizations} per basis update, (ii) suitability for exploiting mixed precision while maintaining numerical stability, and (iii) a shift from bandwidth-bound BLAS-2 panel factorizations to compute-bound BLAS-3 matrix multiplications for well-conditioned bases. We therefore report per-step cost along three axes: flops, global synchronizations among processors, and BLAS level of the dominant kernel. Throughout, the tall factors $K \in \Rmr$ and $L \in \R^{n \times r}$ are assumed row-distributed across processors, and $F$ is assumed to factorize through the low-rank structure so that the cost of computing $F(P S W^T) W$ and $F(P S W^T)^T P$ stays contained, as for the test problems of Section~\ref{sec:experiments}.

\paragraph{Standard basis update.} Different algorithms can be used to compute an orthogonal basis, such as Householder QR, classical Gram--Schmidt, or modified Gram--Schmidt. For a row-distributed $K$, the column-by-column Householder QR needs two global synchronizations per column to compute the factorization~\citep{demmel2012communication} and $r$ synchronizations to explicitly compute the $Q$ factor from its compact representation, for a total of $3r$ synchronizations. The dominant kernel is a BLAS-2 operation on the full row-dimension $m$. The flop count is $4 m r^2$, higher than classical and modified Gram--Schmidt, as the basis needs to be explicitly constructed from its Householder-based representation. Communication-avoiding variants such as TSQR and CAQR collapse this to a single AllReduce, but require a bespoke reduction tree on Householder factors that most high-level scientific-computing stacks do not expose. Modified Gram--Schmidt requires $j-1$ synchronizations to compute column $j$ for a total of $O(r^2)$ synchronizations: column $j$ cannot start until the inner products and norm computed from column $j-1$ have been combined between processors. Although classical Gram--Schmidt requires $r$ synchronizations, it is known to be unstable in practice. Both classical and modified Gram--Schmidt require $2m r^2$ flops.

\paragraph{Sketch basis update.} RGS has the same communication cost as classical Gram--Schmidt and, on the long-vector updates, half the flops: $r$ synchronizations and $mr^2$ flops, to which the sketch step adds $O(mr\log\ell)$ for an SRHT sketch (sub-leading) or $O(m\ell r) \approx 2mr^2$ for a dense Gaussian sketch (same order as Householder QR). However, it is shown to be numerically as stable as modified Gram--Schmidt and is also suitable for using mixed precision~\citep{balabanov2022randomized}. When $K$ is well conditioned and randomized Cholesky QR can be used, the column-by-column process is replaced by a single sketch of the whole tall factor followed by a small QR:
\begin{enumerate}
    \item $\Theta K \in \R^{\ell \times r}$ --- one BLAS-3 tall-and-skinny matmul; one global reduction of an $\ell \times r$ block.
    \item Thin QR of $\Theta K$ --- $O(\ell r^2)$ flops, entirely local.
    \item $P = K R^{-1}$ --- BLAS-3 matmul, embarrassingly parallel on the row-distributed $K$.
\end{enumerate}
The total per-basis synchronization count is $1$, independent of $r$, and every kernel is BLAS-3. Step~1 costs $O(m \ell r)$ flops for a dense Gaussian sketch; subsampled randomized Hadamard transforms reduce this to $O(m r \log \ell)$~\citep{ailon2006approximate, balabanov2022randomized}. On GPUs, where BLAS-2 panel kernels are memory-bound, and on TPUs and tensor cores, which expose only batched matrix products, this BLAS-3 rCholQR path is the precondition for the basis update to approach peak hardware throughput.

\paragraph{Per-step totals.} The sKSL integrator runs the K- and L-basis updates sequentially (the S-substep needs $P_1$ before L starts), so its per-step synchronization count is $O(r^2)$ for modified Gram--Schmidt, $O(r)$ for Householder QR, classical Gram--Schmidt, and RGS; and $O(1)$ when randomized Cholesky QR is used in place of RGS. The sBUG variant dispatches the K- and L-updates to separate devices (Remark~\ref{rem:parallelism}), which halves both counts on two devices; the rCholQR total stays $O(1)$ independently of $r$. On distributed or multi-GPU hardware where reduction latency is a material fraction of the step cost, this is a real advantage at the ranks $r \in [10, 100]$ typical of DLRA applications~\citep{balabanov2022randomized, higgins2025analysis}.

As a disclaimer, our experiments (Section~\ref{sec:experiments}) run on a single CPU on \texttt{numpy}/\texttt{scipy} at small problem sizes where reduction latency is negligible; they validate the theory of Sections~\ref{sec:orthogonal} and~\ref{sec:oblique} rather than measure wall-clock speedup. A speedup is expected only on large problems with $m, n$ in the range $10^5$--$10^7$, especially for the parallel BUG and augmented-BUG variants, or in a higher-dimensional tensor setting (see future directions in the conclusion section).

The bases $P$ and $W$ produced by RGS with $(r, \epsilon, \delta)$-OSE sketches are also nearly Euclidean-orthonormal --- $\tfrac{1}{1+\epsilon} I \preceq P^T P,\, W^T W \preceq \tfrac{1}{1-\epsilon} I$ with probability at least $1-\delta$ per basis ($1-2\delta$ jointly, $\Theta$ and $\Omega$ being independent) --- so the inverses $(P^T P)^{-1}$ and $(W^T W)^{-1}$ that appear in the orthogonal substeps of Table~\ref{tab:substeps} are well-conditioned and can be computed stably via Cholesky; see~\citep{balabanov2022randomized} for the detailed RGS stability analysis.

\section{Numerical experiments} \label{sec:experiments}

We compare three families of integrators on three test problems that span the spectrum of low-rank compatibility. The integrators are: (i) the classical DLRA BUG method of \citet{ceruti2022unconventional} as an unsketched baseline; (ii) the \emph{oblique} sketch DLRA variants of KSL and BUG analyzed in Section~\ref{sec:oblique}; and (iii) the \emph{orthogonal} sketch DLRA variants of KSL and BUG of Section~\ref{sec:orthogonal}. All experiments are run on a MacBook Pro with an Apple M1 chip and $16\,$GB of RAM. The implementation is in Python and is publicly available on GitHub\footnote{\url{https://github.com/BenjaminCarrel/Sketch-DLRA}}. The sketch matrices are Gaussian $\Theta \in \R^{\ell \times m}$ and $\Omega \in \R^{\ell \times n}$, with sketch size $\ell$ reported for each experiment. All sketch-based methods produce their sketch-orthogonal bases via the randomized Gram--Schmidt (RGS) process of~\citet{balabanov2022randomized}, followed by a single randomized Cholesky QR refinement to enforce sketch-orthogonality up to machine precision \citep{dedamas2025arnoldi} (Section~\ref{subsec:sketch_ortho}). A single pair $(\Theta, \Omega)$ is drawn and held fixed over the whole trajectory (rather than resampled at each macro step as in Section~\ref{sec:integrators}), and the statistics below are taken over independent such draws. For each sketch method, the reported curve is the median relative Frobenius error
$$
\mathrm{err}(t) \;:=\; \frac{\norm{Y(t) - A_{\mathrm{ref}}(t)}_F}{\norm{A_{\mathrm{ref}}(t)}_F}
$$
over $20$ independent draws of $(\Theta, \Omega)$ ($5$ for Vlasov--Poisson, see Section~\ref{subsec:vlasov_poisson}); the shaded band is the interquartile range. The reference trajectory $A_{\mathrm{ref}}(t)$ is obtained by integrating the full-order problem~\eqref{eq:fom_intro} with \texttt{scipy}'s adaptive RK45 at tight tolerance. As a benchmark for the best achievable rank-$r$ approximation we also plot $\norm{\sigma_r(A_{\mathrm{ref}}(t))}_F / \norm{A_{\mathrm{ref}}(t)}_F$, the relative error of the best rank-$r$ SVD truncation. The experiments are organized from the most favorable case (Allen--Cahn, approximately LRC) to the most adversarial case (Vlasov--Poisson, large perpendicular residual).

\subsection{Allen--Cahn equation} \label{subsec:allen_cahn}

The Allen--Cahn equation~\citep{allen1979microscopic} describes the process of phase separation in multi-component alloy systems.
In its simplest form, it is expressed as the partial differential equation
\begin{equation} \label{eq:allen_cahn}
\begin{aligned}
&\dt{u}(t, x, y) = \varepsilon \Delta u(t, x, y) + u(t, x, y) - u(t, x, y)^3, &&(x,y) \in \mathcal{D} \subset \R^2, \quad t \in [0, T],  \\
&u \text{ periodic on } \partial \mathcal{D}, &&t \in [0, T], \\
&u(0, x, y) = u_0(x, y) &&(x, y) \in \mathcal{D},
\end{aligned}
\end{equation}
where $\Delta u$ is the diffusion term, $u - u^3$ is the reaction term, and $\varepsilon$ is a small parameter that controls the stiffness of the equation.
By discretizing~\eqref{eq:allen_cahn} using second-order centered finite differences on a uniform tensor-product grid with $n$ points per direction, we obtain the Sylvester-like matrix differential equation
\begin{equation} \label{eq:allen_cahn_discrete}
\dt{X}(t) \;=\; A\, X(t) + X(t)\, A^T + X(t) - X(t)^{*3},
\end{equation}
where $A \in \R^{n \times n}$ is $\varepsilon$ times the symmetric one-dimensional finite-difference Laplacian and the power is taken element-wise ($X^{*3} = X * X * X$, with $*$ the Hadamard product). The diffusion $A X + X A^T$ preserves rank-$r$ structure exactly, and for smooth initial data the reaction $X - X^{*3}$ acts as a low-rank perturbation along the trajectory. This places Allen--Cahn squarely in the approximately $(r, k)$-low-rank compatible regime for a small $k$, and the low-rank-compatibility analysis of Section~\ref{subsec:equivalence} predicts that the oblique sketch DLRA should perform comparably to its orthogonal counterpart on this approximately-LRC problem.

We run~\eqref{eq:allen_cahn_discrete} on $\mathcal{D} = [0, 2\pi]^2$ with $n = 64$, $\varepsilon = 0.01$, rank $r = 8$, time horizon $t \in [0, 10]$, a DLRA macro step $\Delta t = 1/60$, and a solution snapshot saved every $10$ DLRA steps (i.e., every $1/6$ time units, for $61$ snapshots in total). Within each DLRA step, the K, L, and S sub-problems are solved with \texttt{scipy}'s adaptive RK45 at relative and absolute tolerance $10^{-12}$. Figure~\ref{fig:allen_cahn} shows the relative error of the oblique and orthogonal sketch variants at sketch size $\ell = 2r = 16$, compared against the unsketched BUG baseline and the best rank-$r$ SVD truncation of the reference trajectory. Both sketch families produce sensible low-rank approximations of the reference solution. The oblique variant tracks the orthogonal one in the median, confirming the LRC prediction --- on a problem where the perpendicular residual is near low-rank, the equivalence regime of Section~\ref{subsec:equivalence} applies --- but with a visibly wider interquartile band, the first sign of the run-to-run sensitivity that becomes severe on the Vlasov--Poisson problem, where the perpendicular residual is large.

\begin{figure}[htbp]
    \centering
    \includegraphics[width=0.8\textwidth]{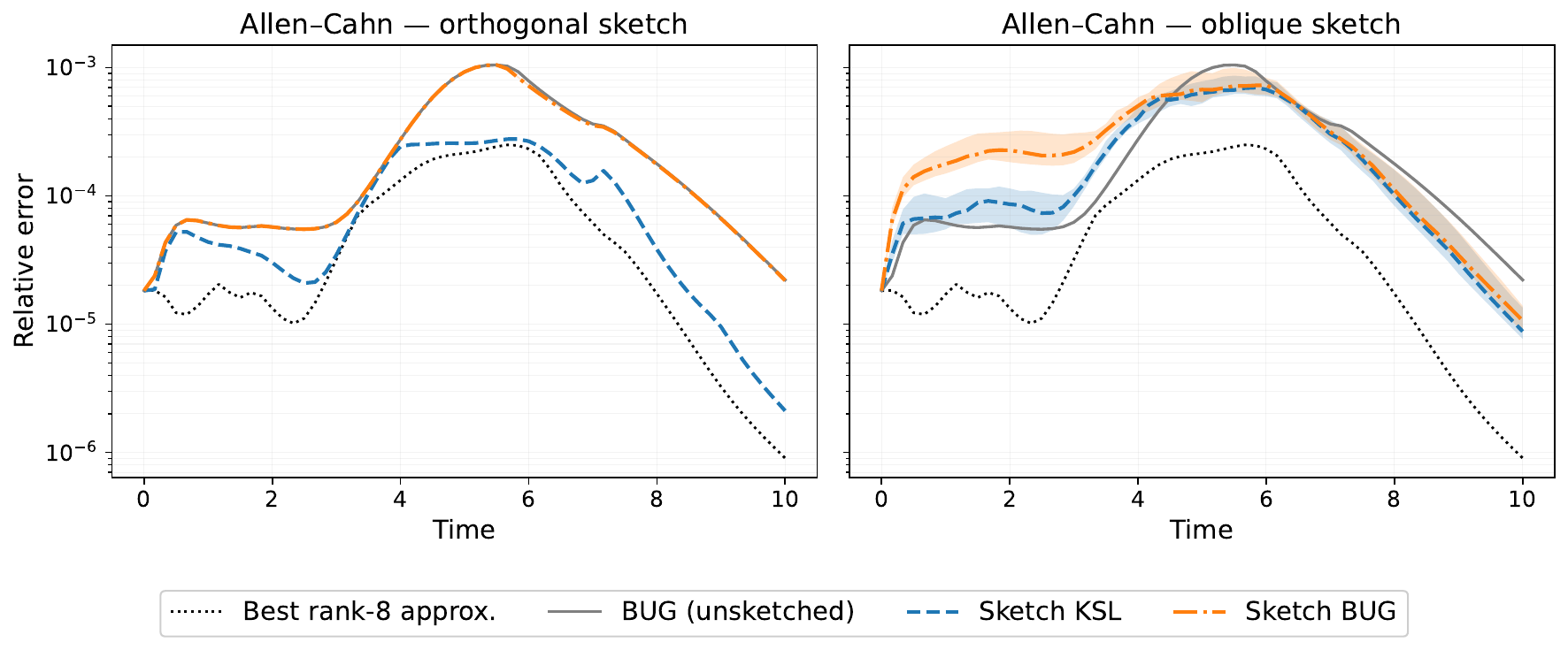}
    \caption{Allen--Cahn equation~\eqref{eq:allen_cahn_discrete} with $n = 64$, $r = 8$, $\varepsilon = 0.01$, $\mathcal{D} = [0, 2\pi]^2$, sketch size $\ell = 2r = 16$. Each sketch curve is the median over $20$ Gaussian sketches; the shaded band is the interquartile range. The solid gray curve is the unsketched BUG baseline and the black dotted curve is the best rank-$r$ SVD truncation of the reference. Left: orthogonal sketch variants. Right: oblique sketch variants.}
    \label{fig:allen_cahn}
\end{figure}

\subsection{Fokker--Planck equation} \label{subsec:fokker_planck}

The Fokker--Planck equation describes the evolution of the probability density function (PDF) of the $d$-dimensional state $X_t$ of the It\^o stochastic differential equation
$$
d X_t \;=\; \mu(X_t)\, dt + \sigma\, dW_t,
$$
where $\mu : \R^d \to \R^d$ is the drift, $\sigma > 0$ is a constant diffusion coefficient, and $W_t$ is the $d$-dimensional standard Wiener process. The associated PDF $f(t, x)$ satisfies
\begin{equation} \label{eq:fokker_planck}
    \partial_t f(t, x) \;=\; - \sum_{i=1}^d \partial_{x_i}\!\bigl(\mu_i(x)\, f(t, x)\bigr) + \frac{\sigma^2}{2} \sum_{i=1}^d \partial^2_{x_i} f(t, x),
\end{equation}
with initial condition $f(0, x) = f_0(x)$.

Low-rank methods for the Fokker--Planck PDF are a natural setting: closely related are the dynamically orthogonal (DO) field equations~\citep{sapsis2009dynamically, musharbash2015error} and the rank-adaptive tensor approach of~\citep{dektor2021rank}, whose drift we adopt below. Following~\citep{dektor2021rank}, we take $\sigma = 2$ and the drift
$$
\mu_i(x) \;=\; \bigl(\gamma(x_{i+1}) - \gamma(x_{i-2})\bigr)\, \xi(x_{i-1}) - \phi(x_i),
\qquad i = 1, \ldots, d,
$$
with indices interpreted modulo $d$ and $\gamma, \xi, \phi$ the $2\pi$-periodic functions $\gamma(x) = \sin(x)$, $\xi(x) = \cos(x)$, $\phi(x) = \exp(\sin(x)) + 1$. We specialize to $d = 2$ on the flat torus $\mathcal{D} = [0, 2\pi]^2$ with the initial condition
$$
f_0(x_1, x_2) \;=\; \frac{1}{m_0}\Bigl(e^{\sin(x_1 - x_2)^2} + \sin(x_1 + x_2)^2\Bigr),
$$
where $m_0$ normalizes $f_0$ to unit Frobenius norm on the discretization grid.

Discretizing~\eqref{eq:fokker_planck} with $n$ equispaced points per direction and centered finite differences, we obtain the Sylvester-like matrix differential equation
\begin{equation} \label{eq:fokker_planck_discrete}
    \dt{X}(t) \;=\; D_{xx}\, X(t) + X(t)\, D_{yy}^T - D_x\bigl(M_1 * X(t)\bigr) - \bigl(M_2 * X(t)\bigr)\, D_y^T, \qquad X(0) = X_0,
\end{equation}
where $D_{xx}, D_{yy} \in \R^{n \times n}$ are the scaled periodic 1D Laplacians, $D_x, D_y \in \R^{n \times n}$ are periodic centered first-difference operators, $*$ is the Hadamard product, and $M_1, M_2 \in \R^{n \times n}$ are the drift coefficients $\mu_1, \mu_2$ evaluated on the grid.

Low-rank compatibility is less clear-cut here than for Allen--Cahn: the diffusion $D_{xx} X + X D_{yy}^T$ is rank-preserving, but the drift matrices $M_1, M_2$ are not separable in $(x, y)$, so the Hadamard products $M_1 * X$ and $M_2 * X$ generically raise the rank of $X$. In practice, however, $M_1$ and $M_2$ built from the smooth periodic functions $\gamma, \xi, \phi$ are exactly rank-$3$ matrices\footnote{Expanding $\mu_1(x_1, x_2) = \sin(x_2)\cos(x_2) - \sin(x_1)\cos(x_2) - (e^{\sin(x_1)} + 1)$ exhibits $\mu_1$ as a sum of three separable functions --- a function of $x_2$ alone, a tensor product in $(x_1, x_2)$, and a function of $x_1$ alone --- so $M_1$ has rank at most $3$ on any tensor-product grid; the same holds for $M_2$ by symmetry. A numerical SVD at $n = 64$ confirms $\sigma_4(M_i) / \sigma_1(M_i) \approx 10^{-16}$ for $i = 1, 2$, so $M_1 * X$ and $M_2 * X$ stay at rank at most $3 r$ and remain approximately low-rank along the trajectory.} This places Fokker--Planck in the approximate-LRC regime, and the analysis of Section~\ref{subsec:equivalence} predicts that the oblique variant should produce sensible results, as for Allen--Cahn.

We run~\eqref{eq:fokker_planck_discrete} with $n = 64$, rank $r = 8$, time horizon $t \in [0, 1]$, a DLRA macro step $\Delta t = 1/600$, and a solution snapshot saved every $10$ DLRA steps (i.e., every $1/60$ time units, for $61$ snapshots in total). Within each DLRA step, the K, L, and S sub-problems are solved with \texttt{scipy}'s adaptive RK45 at relative and absolute tolerance $10^{-12}$. Figure~\ref{fig:fokker_planck} shows the relative error of the oblique and orthogonal sketch variants at sketch size $\ell = 2r = 16$, compared against the unsketched BUG baseline and the best rank-$r$ SVD truncation of the reference trajectory. The orthogonal sketch variants (left panel) concentrate tightly around the unsketched BUG error --- the interquartile band is essentially invisible over $20$ Gaussian sketches --- and the orthogonal KSL variant is actually slightly below BUG, reflecting the standard ordering between projector-splitting and BUG integrators rather than a sketching effect. The oblique sketch variants (right panel) also compute a good low-rank approximation, but exhibit a clearly visible interquartile band throughout the trajectory, including a wider spread on the final error than their orthogonal counterparts.

\begin{figure}[htbp]
    \centering
    \includegraphics[width=0.8\textwidth]{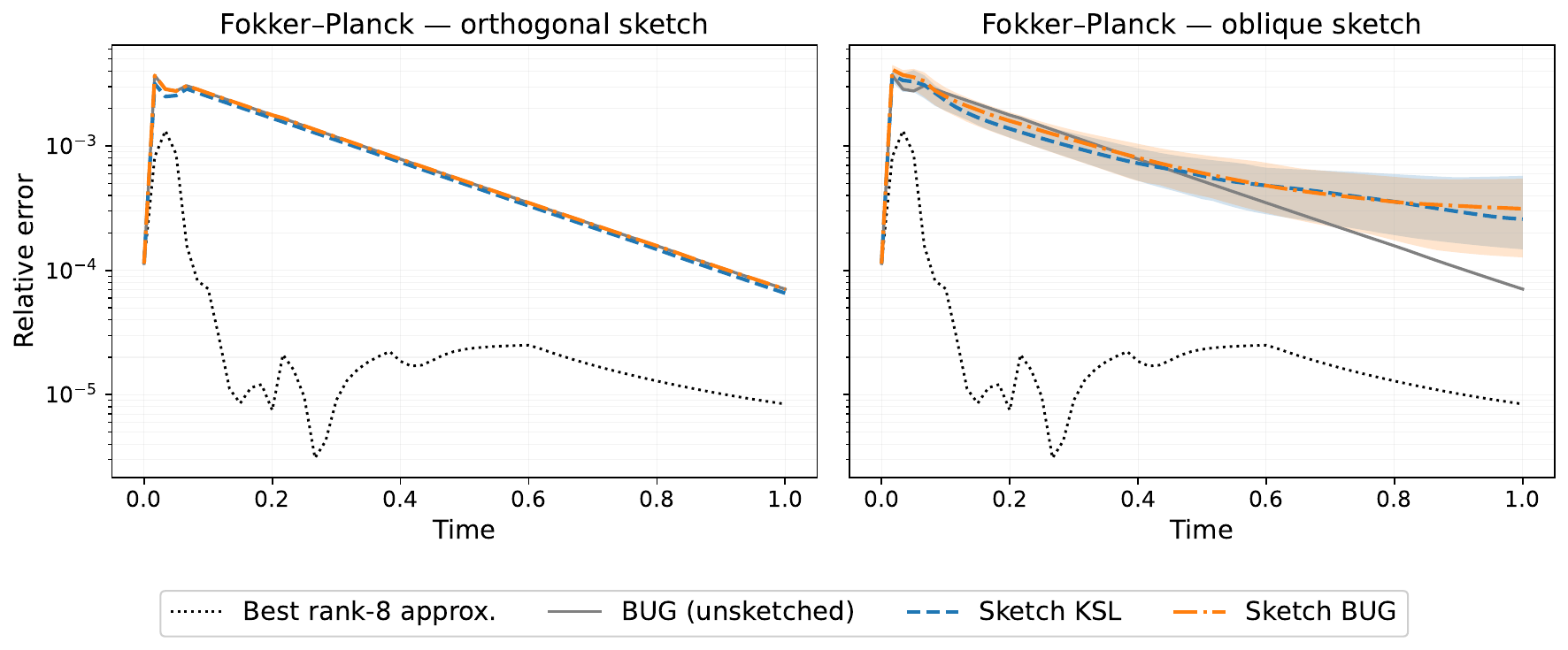}
    \caption{Fokker--Planck equation~\eqref{eq:fokker_planck_discrete} with $n = 64$, $r = 8$, $\sigma = 2$, $\mathcal{D} = [0, 2\pi]^2$, sketch size $\ell = 2r = 16$. Each sketch curve is the median over $20$ Gaussian sketches; the shaded band is the interquartile range. The solid gray curve is the unsketched BUG baseline and the black dotted curve is the best rank-$r$ SVD truncation of the reference. Left: orthogonal sketch variants. Right: oblique sketch variants.}
    \label{fig:fokker_planck}
\end{figure}

\subsection{Vlasov--Poisson equation} \label{subsec:vlasov_poisson}

The Vlasov--Poisson equation models the evolution of an electron distribution in a collisionless plasma, in the electrostatic regime with a uniform neutralizing ion background:
\begin{equation} \label{eq:vlasov_poisson}
\begin{aligned}
&\partial_t f + v \cdot \nabla_x f - E(f) \cdot \nabla_v f = 0, \\
&\nabla_x \cdot E(f) = 1 - \int f\, dv, \qquad \nabla_x \times E(f) = 0,
\end{aligned}
\end{equation}
where $f(t, x, v) \ge 0$ is the particle density and $E(f)$ is the self-consistent electric field, with periodic boundary conditions in $x$. DLRA methods for this equation have been developed in~\citep{einkemmer2018low, einkemmer2019quasi, einkemmer2021mass}. We specialize to $d = 1$ and discretize on an $n_x \times n_v$ grid using second-order centered finite differences; the unknown matrix $X(t) \in \R^{n_x \times n_v}$ satisfies
\begin{equation} \label{eq:vlasov_poisson_discrete}
    \dt{X}(t) \;=\; -\, D_x\, X(t)\, \Lambda_v \;+\; E\bigl(t, X(t)\bigr)\, X(t)\, D_v^T, \qquad X(0) = X_0,
\end{equation}
with $D_x, D_v$ the periodic centered first-difference operators, $\Lambda_v = \operatorname{diag}(v_j)$ the velocity grid, and $E(t, X) = \operatorname{diag}(E_1, \ldots, E_{n_x})$ the diagonal matrix of discrete electric-field values. In the nonlinear regime the solution is no longer well approximated on the rank-$r$ manifold, so the perpendicular component $\projperp{Y}{F(Y)}$ becomes large and the oblique residual of~\eqref{eq:oblique_residual} amplifies quickly instead of (approximately) vanishing; the equivalence of Proposition~\ref{prop:oblique_equivalence} is then lost. This is the adversarial test case that motivates orthogonal sketch DLRA over oblique sketch DLRA.

We run~\eqref{eq:vlasov_poisson_discrete} on the standard two-stream instability~\citep{cheng1976integration, filbet2003comparison} with $(\mathcal{D}_x, \mathcal{D}_v) = ([0, 10\pi], [-6, 6])$, $n_x = n_v = 64$, rank $r = 8$, time horizon $t \in [0, 60]$, and initial condition
$$
f_0(x, v) \;=\; \frac{1}{2\sqrt{2\pi}}\Bigl(e^{-(v - v_0)^2 / 2} + e^{-(v + v_0)^2 / 2}\Bigr)\bigl(1 + \alpha \cos(k x)\bigr),
\qquad v_0 = 2.4,\ k = 0.2,\ \alpha = 10^{-3},
$$
producing two counter-streaming beams perturbed by a small linear mode. We use a DLRA macro step $\Delta t = 1/20$ with \texttt{scipy} adaptive RK45 substeps at tolerance $10^{-12}$. Figure~\ref{fig:vlasov_poisson} reports the relative error and the electric energy $\tfrac{1}{2}\int |E(t, x)|^2 dx$ at sketch size $\ell = 2r = 16$, each curve a median over $5$ Gaussian draws (fewer than for Allen--Cahn and Fokker--Planck because the failure is already unambiguous; individual oblique runs trigger inner-integrator failures and yield truncated curves). The orthogonal variants (blue) track the unsketched BUG baseline within a constant factor across the full window, including through the nonlinear saturation; the oblique variants (red) diverge exponentially, with the oblique KSL variant eventually causing the inner integrator to fail.

The failure is structural. Vlasov--Poisson is Hamiltonian and conserves total energy $E_{\mathrm{kin}} + E_{\mathrm{el}}$ and total mass $\iint f\,dx\,dv$, and the classical DLRA inherits both up to the rank-$r$ truncation error; at our parameters the orthogonal sketch BUG preserves them to order $10^{-2}$ over $[0, 60]$ (Figure~\ref{fig:vlasov_poisson_conservation}). The oblique sketch DLRA, by contrast, solves a perturbed ODE $\dt{Y} = \mathcal P^{\Theta, \Omega}_{Y} F(Y)$ whose invariants depend on the random sketches rather than on the physics: total energy drifts by more than $100\%$ before $t = 40$, and total mass goes negative at $t \approx 38$. The mechanism is the one diagnosed in Section~\ref{subsec:equivalence}: in the nonlinear regime the perpendicular residual $\projperp{Y}{F(Y)}$ is large and far from low-rank, so no sketch of size $\ell \ll m, n$ can make the oblique residual of~\eqref{eq:oblique_residual} vanish, and the equivalence of Proposition~\ref{prop:oblique_equivalence} is lost. Increasing the sketch size within the low-rank budget does not rescue the oblique scheme: the failure is intrinsic to the oblique projection on this problem, not an artifact of the sketch size $\ell = 2r$. The orthogonal sketch DLRA sidesteps both issues. Its tangent projection is the standard orthogonal projector~\eqref{eq:orthogonal_proj_sketch} regardless of $(\Theta, \Omega)$ (Section~\ref{sec:orthogonal}), so the trajectory coincides with the classical DLRA solution and inherits its conservation behavior. The sketches still enter through the RGS basis extraction, but there an OSE plays only a \emph{stability} role --- keeping $P^TP$ and $W^TW$ well-conditioned in the orthogonal substeps of Table~\ref{tab:substeps} --- rather than the \emph{correctness} role it had to play in the oblique scheme through Proposition~\ref{prop:oblique_equivalence}.

\begin{figure}[htbp]
    \centering
    \includegraphics[width=0.8\textwidth]{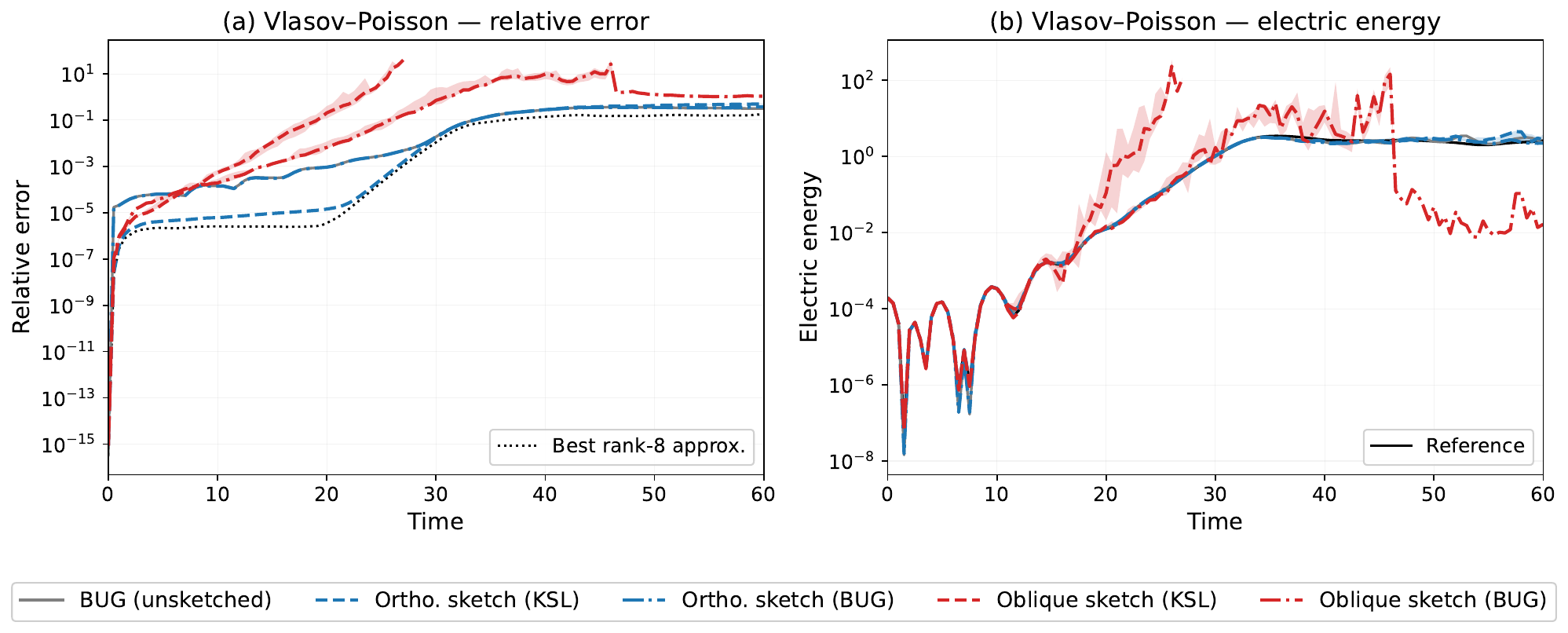}
    \caption{Vlasov--Poisson equation~\eqref{eq:vlasov_poisson_discrete} (two-stream instability) with $n_x = n_v = 64$, $r = 8$, sketch size $\ell = 2r = 16$, and $t \in [0, 60]$. Each sketch curve is the median over $5$ Gaussian sketches; the shaded band is the interquartile range. Left: relative Frobenius error. Right: electric energy $\tfrac{1}{2}\int |E(t, x)|^2 dx$. The orthogonal sketch variants (blue) track the unsketched BUG baseline (gray) on both quantities; the oblique sketch variants (red) diverge, and the electric energy quickly overshoots the reference by more than an order of magnitude before the inner integrator fails.}
    \label{fig:vlasov_poisson}
\end{figure}

\begin{figure}[htbp]
    \centering
    \includegraphics[width=\textwidth]{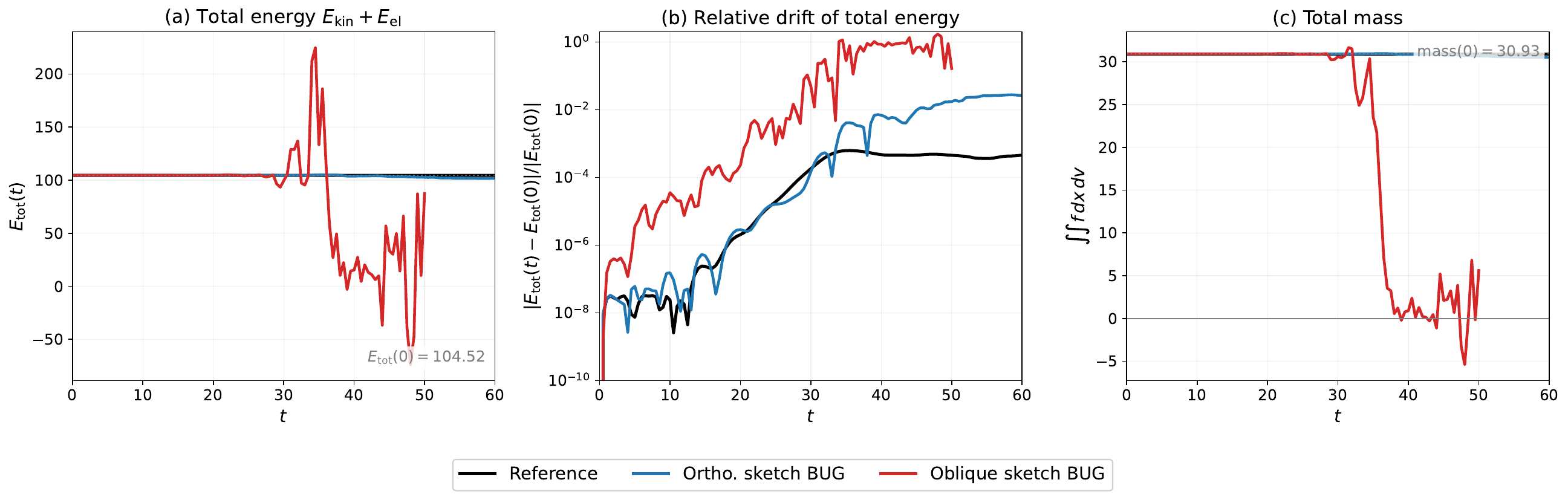}
    \caption{Conservation laws on the Vlasov--Poisson two-stream instability, same parameters as Figure~\ref{fig:vlasov_poisson}. Left: total energy $E_{\mathrm{tot}}(t) = E_{\mathrm{kin}}(t) + E_{\mathrm{el}}(t)$. Middle: relative drift $|E_{\mathrm{tot}}(t) - E_{\mathrm{tot}}(0)| / |E_{\mathrm{tot}}(0)|$, log scale. Right: total mass $\iint f\,dx\,dv$. The reference (black) and orthogonal sketch BUG (blue) preserve both invariants to order $10^{-2}$ over the full window, tracking each other; the oblique sketch BUG (red) violates total energy by more than $100\%$ before $t = 40$ and its mass goes negative at $t \approx 38$, demonstrating that the oblique sketch DLRA is solving a different ODE whose invariants are unrelated to the Vlasov--Poisson Hamiltonian.}
    \label{fig:vlasov_poisson_conservation}
\end{figure}

\section{Conclusion} \label{sec:conclusion}

The orthogonal sketch DLRA we propose applies to any problem (LRC or not) and inherits the classical DLRA error bound, while reducing synchronizations --- and, for structured sketches, flops --- on the basis-update step.
Sketching the Galerkin condition that defines the DLRA leads to an oblique tangent projection whose dynamics deviate from the classical DLRA and fail on problems with a large perpendicular residual. The orthogonal sketch DLRA instead evolves sketch-orthogonal bases under standard orthogonal projections, preserving the geometric structure of the classical DLRA and replacing the only BLAS-2 kernel of the pipeline (Householder QR) by randomized Gram--Schmidt and, when applicable, randomized Cholesky QR, which together let the basis update fully exploit GPU hardware (Section~\ref{subsec:cost}). We derived sketch versions of the projector-splitting and BUG integrators and validated them on the Allen--Cahn, Fokker--Planck, and Vlasov--Poisson equations. The contribution is exploratory: our small-CPU experiments validate the theory rather than measure wall-clock speedup, which is expected only at large $m, n$.

Three directions follow. First, subspace iteration could be considered to address cases with slowly decaying singular values, where randomized methods are typically less accurate. Second, the parallelism of the sketch BUG integrators could be exploited: the K- and L-substeps are independent, each reducing to an ODE solve and a Level-3 BLAS sketch multiplication, well-suited to a two-GPU pipeline. Third, the framework could be extended to Tucker and tensor-train formats, where the QR bottleneck is more pronounced and RGS extends naturally to each mode. The sketch integrators developed here also combine directly with the low-rank Parareal parallel-in-time integrator of \citet{carrel2023low}, adding parallelism across the time direction.

\bibliographystyle{plainnat}
\bibliography{citations}

\end{document}